\documentclass[oneside,reqno]{amsart}
\usepackage{amssymb,amsmath}
\usepackage[english]{babel}
\usepackage{amsfonts}
\usepackage{color}
\usepackage{amsthm}
\usepackage{graphicx}
\usepackage{mathtools}
\usepackage{bm}
\usepackage{amsmath}
\usepackage{listings}
\usepackage{hyperref}
\usepackage{enumitem}

\usepackage{xy}
\usepackage[draft]{fixme}


\DeclareMathOperator{\Aut}{Aut}
\DeclareMathOperator{\Out}{Out}
\DeclareMathOperator{\Inn}{Inn}

\DeclareMathOperator{\conj}{\mathrm{conj}}

\newcommand{\id}{\mathrm{id}}
\newcommand{\Autc}{\Aut_{Col}}
\newcommand{\Outc}{\Out_{Col}}

\newcommand{\Span}[1]{\left\langle#1\right\rangle}

\let\phi\varphi
\theoremstyle{plain}
\newtheorem{theorem}{Theorem}[section]

\newtheorem{lemma}[theorem]{Lemma}
\newtheorem{proposition}[theorem]{Proposition}

\theoremstyle{remark}
\newtheorem{remark}{Remark}

\theoremstyle{definition}

\newtheorem{notation}[theorem]{Notation}

\usepackage{listings}
\lstdefinelanguage{GAP}{%
 morekeywords={%
 Assert,Info,IsBound,QUIT,%
 TryNextMethod,Unbind,and,break,%
 continue,do,elif,%
 else,end,false,fi,for,%
 function,if,in,local,%
 mod,not,od,or,%
 quit,rec,repeat,return,%
 then,true,until,while%
 },%
 sensitive,%
 morecomment=[l]\#,%
 morestring=[b]",%
 morestring=[b]',%
}[keywords,comments,strings]

\usepackage[T1]{fontenc}
\usepackage[variablett]{lmodern}
\usepackage{xcolor}
\lstset{
 basicstyle=\ttfamily,
 keywordstyle=\color{NavyBlue},
 stringstyle=\color{NavyBlue},
 commentstyle=\color{OliveGreen},
 columns=fullflexible,
 frame = single,
}

\begin{document}
\title[Coleman Automorphisms and Semidihedral Sylow 2-Subgroups]{Class-preserving Coleman Automorphisms of Finite Groups with Semidihedral Sylow 2-Subgroups} 
 \author[R.~Aragona]{Riccardo Aragona}

\address{DISIM \\
 Universit\`a degli Studi dell'Aquila\\
 via Vetoio\\
 I-67100 Coppito (AQ)\\
 Italy}       

\email{riccardo.aragona@univaq.it}

\date{} \thanks{The author's ORCiD: 0000-0001-8834-4358} 
\thanks{The author is member of INdAM-GNSAGA
 (Italy).}
 \thanks{The author declares that he has no conflict of interest.}

\subjclass[2010]{20D45, 20D20, 16U70} \keywords{Class-preserving automorphism, Coleman automorphism, Semidihedral 2-group, Sylow 2-subgroup, normalizer problem}

\begin{abstract}
In this paper, we prove that finite groups with semidihedral Sylow 2-subgroup  have Class-preserving Coleman outer automorphism group of odd order. As a consequence, these groups satisfy the \emph{normalizer problem}. In particular, we extend some existing results in the literature concerning class-preserving Coleman automorphisms of finite groups with semidihedral Sylow 2-subgroups.
\end{abstract}

\maketitle


\section{Introduction}

The study of class-preserving automorphisms of finite groups -- that is, automorphisms that preserve the conjugacy classes of the elements of the group -- has long been a subject of interest in group theory research. Since the early twentieth century, several researchers have investigated which finite groups admit only inner class-preserving automorphisms, or have sought to construct counterexamples to this property. For relevant references, see for example~\cite{bur,gross,her3,mazur,wall}. 

In this paper we focus on the study of class-preserving automorphisms that satisfy an additional property: they become inner when restricted to every Sylow \(p\)-subgroup. Automorphisms satisfying this property are called \emph{Coleman automorphisms}, due to their significance in the study of the normalizer of a finte group \(G\) in the unit group \(\mathcal{U}(\mathbb{Z}(G))\) of its integral group ring \(\mathbb{Z}G\), as shown by Coleman in~\cite[Theorem 1]{col}.

More precisely, let \(G\) be a finite group. We denote by \(\Aut_c(G)\) the subgroup of \(\Aut(G)\) consisting of all class-preserving automorphisms of \(G\); by \(\Autc(G)\), the subgroup of \(\Aut(G)\) consisting of all Coleman automorphisms of \(G\); and by \(\Aut_{\mathbb{Z}}(G)\), the subgroup of \(\Aut(G)\) consisting of all automorphisms of \(G\) that induce an inner automorphism on the integral group ring \(\mathbb{Z}G\). From Coleman's result~\cite[Theorem 1]{col}, together with a result of Jackowski and Marciniak~\cite[Proposition 2.3]{jacmar}, it follows that \(\Aut_{\mathbb{Z}}(G) \leq \Aut_c(G) \cap \Autc(G)\). Moreover, Krempa~\cite[Theorem 3.2]{jacmar} proved that \(\Aut_{\mathbb{Z}}(G)\) is an elementary abelian 2-group. Therefore, denoting by \(\Out_c(G) = \Aut_c(G)/\Inn(G)\) the class-preserving outer automorphism group and by \(\Outc(G) = \Autc(G)/\Inn(G)\) the Coleman outer automorphism group, we deduce that if \(G\) is a finite group such that 
\begin{equation}\label{property}
\Out_c(G) \cap \Outc(G) \text{ is of odd order,}
\end{equation}
then \(\Aut_{\mathbb{Z}}(G) = \Inn(G)\).

It is worth noting that classifying finite groups \(G\) for which \(\Aut_{\mathbb{Z}}(G) = \Inn(G)\) is equivalent to solving the well known \emph{normalizer problem}~\cite[Problem 43]{seh}, that is,
\[
N_{\mathcal{U}(\mathbb{Z}G)}(G) = G Z(\mathcal{U}(\mathbb{Z}G)).
\]


For further details on the normalizer problem, see, for example, \cite{MR1873381}.

In the literature, one can find several affirmative results (see, e.g.,~\cite{herkim,marrog,van,colsemidih}) and a counterexample provided by Hertweck in~\cite{her1} concerning the problem of identifying finite groups \(G\) that satisfy Property~\eqref{property}.
 
The aim of this paper is to prove that any finite group with semidihedral Sylow 2-subgroups satisfies Property~\eqref{property}, and consequently satisfies the normalizer problem. In particular, we extend~\cite[Theorem~3.5]{colsemidih}, where the authors prove that for any solvable finite group \(G\) with semidihedral Sylow 2-subgroups, \(\Out_c(G) \cap \Outc(G)\) has odd order. 
Moreover, considering a different definition of Coleman automorphisms given in~\cite{marrog} -- namely, using our notation, those automorphisms \(\phi \in \Aut_c \cap \Autc(G)\) such that \(\phi^2 \in \Inn(G)\) -- it follows that any such automorphism of odd order is inner. Hence, the main theorem of this paper also extends the results proved in~\cite{aragona}.

\section{Preliminary results}\label{sec2}
In this section, we state the main theorem of the paper and present some preliminary results from~\cite{her3,her4,herkim} that are useful for its proof.

Let $G$ be a finite group. Denote by $\Aut(G)$ the group of all automorphisms of $G$, and by $\Inn(G)$ the subgroup of $\Aut(G)$ consisting of all inner automorphisms. Define the subgroup $\Aut_c(G)\leq \Aut(G)$ as the set of all class-preserving automorphisms of $G$, i.e., those automorphisms that map each element of $G$ to a conjugate of itself. An automorphism $\phi \in \Aut(G)$ is called a \emph{Coleman automorphism} if, for every Sylow $p$-subgroup $S$ of $G$, we have $\phi|_S = \conj_g|_S$ for some $g \in G$, where $\conj_g$ denotes the inner automorphism defined by $\conj_g(x)=g^{-1}xg$. We denote by $\Autc(G)$ the group of Coleman automorphisms of $G$. Notice that both $\Aut_c(G)$ and $\Autc(G)$ contain $\Inn(G)$. Accordingly, we define $\Out_c(G) = \Aut_c(G)/\Inn(G)$ and $\Outc(G) = \Autc(G)/\Inn(G)$ as the class-preserving outer automorphism group and the Coleman outer automorphism group.


The aim of this paper is to prove the following result.
\begin{theorem}\label{main}
If $G$ is a finite group with semidihedral Sylow 2-subgroup, then \(G\) satisfies Property \eqref{property}; in other words, $\Out_c(G)\cap \Outc(G)$ is of odd order. 
\end{theorem}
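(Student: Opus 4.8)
The plan is to show that $\Out_c(G)\cap\Outc(G)$ contains no involution, which is equivalent to odd order since a finite group has odd order precisely when it has no element of order $2$. So suppose for contradiction that $\phi\in\Aut_c(G)\cap\Autc(G)$ represents an element of order $2$ in the intersection, i.e.\ $\phi\notin\Inn(G)$ while $\phi^2\in\Inn(G)$. Since $\phi$ is a Coleman automorphism, for a fixed semidihedral Sylow $2$-subgroup $S$ there is $g\in G$ with $\phi|_S=\conj_g|_S$; replacing $\phi$ by $\conj_g^{-1}\phi$, which lies in the same coset of $\Inn(G)$ and is again class-preserving and Coleman, I may assume that $\phi$ fixes $S$ pointwise. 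Thus the task reduces to proving that a class-preserving Coleman automorphism inducing an outer involution and fixing a semidihedral Sylow $2$-subgroup pointwise cannot exist. A useful structural remark to keep in hand is that $\Aut(S)$ is a $2$-group for semidihedral $S$ (the cyclic maximal subgroup $\Span{r}$ is characteristic and $(\Z/2^{n-1})^\ast$ is a $2$-group), which makes the Coleman obstruction at the prime $2$ especially rigid.

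Next I would argue by induction on $|G|$, taking $G$ to be a counterexample of least order, and use the preliminary results of Hertweck and Hertweck--Kimmerle collected in Section~\ref{sec2} to propagate the obstruction to sections. Concretely, for a characteristic (hence $\phi$-invariant) normal subgroup $N$ the induced map $\bar\phi$ on $G/N$ is again class-preserving, and since $\phi$ fixes $S$ pointwise it fixes the isomorphic Sylow $2$-subgroup $SN/N$ pointwise, so $\bar\phi$ is Coleman on $G/N$. In particular I would first factor out $N=O_{2'}(G)$: the quotient retains a semidihedral Sylow $2$-subgroup, and by the cited machinery an outer involution must survive in at least one chief factor. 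Stripping away the solvable radical as well, the solvable case being already settled by \cite[Theorem~3.5]{colsemidih}, reduces the situation to one governed by the nonabelian composition factors of $G$, each carrying semidihedral (or lower $2$-rank) Sylow $2$-structure.

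At this point I would invoke the Alperin--Brauer--Gorenstein classification of finite groups with semidihedral Sylow $2$-subgroups: the only nonabelian simple groups arising among the relevant composition factors are $\mathrm{PSL}_3(q)$ with $q\equiv 3\pmod 4$, $\mathrm{PSU}_3(q)$ with $q\equiv 1\pmod 4$, and the Mathieu group $M_{11}$. For each of these, and for the almost simple groups built upon them, I would verify directly that no class-preserving Coleman automorphism induces an outer involution: here the class-preserving hypothesis is decisive, being combined with known data on conjugacy classes and outer automorphisms of these groups, while the Coleman condition at $2$ is controlled using that $\Aut(S)$ is a $2$-group so that the local-to-global contribution at $2$ is forced to be inner. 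This contradicts the existence of the minimal counterexample and proves the theorem.

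The hard part will be the reduction of the second paragraph. Class-preserving automorphisms do not, in general, restrict to class-preserving automorphisms of normal subgroups, nor need an outer automorphism remain outer after passing to a quotient, so the delicate point is to combine the \emph{class-preserving} and \emph{Coleman} properties (through the results of \cite{her3,her4,herkim}) so as to guarantee that a genuinely outer involution localizes to a single nonabelian chief factor with semidihedral $2$-structure rather than dissolving across the composition series. The secondary difficulty is the explicit verification for $\mathrm{PSL}_3(q)$, $\mathrm{PSU}_3(q)$, and $M_{11}$, where the action on conjugacy classes and the Coleman condition at $2$ must be tracked simultaneously.
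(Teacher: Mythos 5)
Your overall framing (minimal counterexample, normalizing $\phi$ by an inner automorphism so that it fixes a semidihedral Sylow $2$-subgroup $S$ pointwise, propagating the class-preserving and Coleman properties to sections via \cite{her3,her4,herkim}, and quoting \cite[Theorem~3.5]{colsemidih} for the solvable case) matches the paper's setup, but the two steps that would constitute the actual proof are not carried out, and one of them rests on a false premise. First, the reduction you yourself flag as ``the hard part'' --- that a genuinely outer involution in $\Out_c(G)\cap\Outc(G)$ must ``survive in at least one chief factor'' --- is not a routine d\'evissage and is in fact where the entire difficulty lies: a minimal counterexample is precisely a group on which $\phi$ induces \emph{inner} automorphisms on every proper section, so the obstruction need not localize to any chief factor at all. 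The paper's proof shows that in a minimal counterexample the contradiction does not come from a nonabelian chief factor: it comes from the interaction between an \emph{abelian} minimal normal subgroup $M\leq O_{2'}(F(G))$ (Lemma~\ref{lemO2'}), which is eventually shown to be cyclic of prime order (Lemma~\ref{lem8}), and the $2$-structure of $G$ ($O_2(G)=Z(S)$ and an element of order $4$ in $[S,S]$ that must simultaneously invert $M$ and centralize it). Your plan has no mechanism to reach this configuration, and the configuration it aims for (an outer involution concentrated on a nonabelian factor of ABG type) is exactly what the minimal-counterexample structure rules out.

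Second, the classification step is misapplied. The Alperin--Brauer--Gorenstein theorem lists the simple groups whose \emph{own} Sylow $2$-subgroups are semidihedral; it does not describe the nonabelian composition factors of a group $G$ with semidihedral Sylow $2$-subgroups. Those factors typically have strictly smaller $2$-structure: for example $M_{10}=A_6.2$ has semidihedral Sylow $2$-subgroups of order $16$, while its unique nonabelian composition factor $A_6\cong \mathrm{PSL}_2(9)$ has dihedral Sylow $2$-subgroups of order $8$. In a minimal counterexample this is forced: by Remark~\ref{Snonorm} no proper normal subgroup contains $S$, so every component has cyclic, Klein four, dihedral, or generalized quaternion Sylow $2$-subgroups, and the relevant classifications are Brauer--Suzuki (excluding quaternion), Burnside/Feit--Thompson (excluding cyclic), and Gorenstein--Walter \cite{gorwal} (giving $\mathrm{PSL}_2(q)$ and $A_7$) --- which is exactly how the paper proceeds, never invoking ABG. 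So even if your reduction were completed, the case analysis you propose for $\mathrm{PSL}_3(q)$, $\mathrm{PSU}_3(q)$, and $M_{11}$ (itself only asserted, not verified) would address the wrong list of groups, and the genuinely occurring cases --- almost simple extensions of dihedral-type socles such as $M_{10}$, and, above all, the non-almost-simple Fitting configurations --- would remain untreated.
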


In order to prove the main theorem, we will need the following results. As usual, we denote by $O_{p}(G)$ the maximal normal $p$-subgroup of $G$, which is the intersection of all  Sylow \(p\)-subgroups of \(G\), and by $O_{p'}(G)$ the maximal normal subgroup of $G$ whose order is coprime to $p$.

\begin{lemma}\label{lem1}
Let $N\unlhd G$ and let $p$ be a prime number which does not divide the order of $G/N$. Then the following hold:
\begin{itemize}
\item[(i)] If $\phi\in\Aut(G)$ is a class-preserving or a Coleman automorphism of $G$ of $p$-power order, then $\phi$ induces a class-preserving or a Coleman automorphism of $N$, respectively.
\item[(ii)] If $\Out_c(N)$ or $\Outc(N)$ is a $p'$-group, then so is $\Out_c(G)$ or $\Outc(G)$, respectively. Moreover, if $\Out_c(N)\cap \Outc(N)$ is a $p'$-group, then so is $\Out_c(G)\cap\Outc(G)$.
\end{itemize}
\end{lemma}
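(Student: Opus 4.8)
The plan is to prove the two parts in order, first isolating the structural fact that makes $\phi|_N$ well defined and then feeding it into a coprime-action argument. The starting observation is that $p\nmid|G/N|$ forces $|G|_p=|N|_p$, so every Sylow $p$-subgroup of $G$ already lies in $N$ and is a Sylow $p$-subgroup of $N$. Next I would check that $\phi$ leaves $N$ invariant, so that $\phi|_N\in\Aut(N)$ makes sense. For a class-preserving $\phi$ this is immediate: $\phi(n)$ is a $G$-conjugate of $n$ and $N\unlhd G$, whence $\phi(N)\subseteq N$. For a Coleman $\phi$ I would argue Sylow-by-Sylow: for a prime $q$ and $R\in\mathrm{Syl}_q(N)$ one has $R=S\cap N$ for some $S\in\mathrm{Syl}_q(G)$, and writing $\phi|_S=\conj_g|_S$ gives $\phi(R)=R^{g}=S^{g}\cap N\in\mathrm{Syl}_q(N)$; since $N$ is generated by its Sylow subgroups this yields $\phi(N)=N$. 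Thus in both cases $\phi$ induces $\phi|_N\in\Aut(N)$.

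The heart of part (i) is then to show that the $G$-fusion witnessed by $\phi$ can be realized \emph{inside $N$}. Concretely, for a class-preserving $\phi$ and $n\in N$ I must upgrade $\phi(n)\sim_G n$ to $\phi(n)\sim_N n$; for a Coleman $\phi$ and $R\in\mathrm{Syl}_q(N)$ I must upgrade $\phi|_R=\conj_g|_R$ with $g\in G$ to the same relation with a conjugating element in $N$. In each case the admissible conjugators form a coset of a centralizer ($C_G(n)g$, resp. $C_G(R)g$), and the obstruction to meeting $N$ is measured in the $p'$-group $\bar G=G/N$. Here I would exploit that $\Span{\phi}$ is a $p$-group acting coprimely on $\bar G$. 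Forming $\Gamma=G\rtimes\Span{\phi}$ and passing to $\Gamma/N=\bar G\rtimes\Span{\bar\phi}$, I would take $p$-parts to replace $\phi$ by a genuine $p$-element, and then use the Frattini argument together with the $\bar G$-conjugacy of complements to the normal $p'$-subgroup $\bar G$ (Schur--Zassenhaus) to pin the conjugator into $N$. For the class-preserving case the cleanest engine is Glauberman's lemma applied to the coprime action of $\Span{\phi}$ on $\bar G$ and the compatible action on the set of $N$-classes contained in $\mathrm{cl}_G(n)\cap N$, on which $\bar G$ acts transitively; combined with the norm relation $\prod_{i=0}^{|\phi|-1}\bar\phi^{i}(\bar g)\in\overline{C_G(n)}$ coming from $\phi^{|\phi|}=\id$, this lets me conclude $\bar g\in\overline{C_G(n)}$, i.e. $\phi(n)\sim_N n$. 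I expect this coprime-fusion step — certifying that the conjugating element may be chosen in $N$, equivalently that $\phi$ fixes every relevant $N$-class and not merely one of them — to be the main obstacle, since it is exactly where $p\nmid|G/N|$ and the $p$-power order of $\phi$ must be used in tandem (the favourable special case $\bar\phi=\id$ already follows directly from the norm relation, and the general case is handled by reduction to it).

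For part (ii) I would argue by contradiction, reducing a statement about the full outer groups to the $p$-element case handled by (i). Assuming $\Out_c(G)$ (resp. $\Outc(G)$) had nontrivial $p$-part, I would lift a Sylow $p$-subgroup of the relevant outer group to $\Aut_c(G)$ (resp. $\Autc(G)$), obtaining $\phi$ of $p$-power order that is not inner. By part (i), $\phi|_N\in\Aut_c(N)$ (resp. $\Autc(N)$) still has $p$-power order, so its class in $\Out_c(N)$ (resp. $\Outc(N)$) is a $p$-element of a $p'$-group and hence trivial: $\phi|_N=\conj_n|_N$ for some $n\in N$. Replacing $\phi$ by $\conj_n^{-1}\circ\phi$ I may assume $\phi|_N=\id_N$, and it remains to show that a class-preserving (resp. Coleman) automorphism of $G$ centralizing $N$ is inner, which contradicts $\phi\notin\Inn(G)$. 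This injectivity step is again coprime: a $p$-element of $\Aut(G)$ centralizing $N$ descends to a $p$-element of $\bar G\rtimes\Span{\bar\phi}$ centralizing the normal $p'$-subgroup $\bar G$, and Schur--Zassenhaus forces it into $G\,C_{\Gamma}(G)$, i.e. $\phi\in\Inn(G)$. Finally, the ``moreover'' clause follows by running the same argument with $\phi$ chosen simultaneously class-preserving and Coleman, so that $\phi|_N$ represents an element of $\Out_c(N)\cap\Outc(N)$ and the hypothesis that this intersection is a $p'$-group is what is applied.
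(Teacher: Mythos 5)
Your proposal is a self-contained proof attempt, whereas the paper does not prove this lemma at all: it simply cites Hertweck \cite[Corollary~5]{her3} and Hertweck--Kimmerle \cite[Corollary~3]{herkim}. Several of your steps are correct and are indeed the right moves (every Sylow $p$-subgroup of $G$ lies in $N$; $\phi(N)=N$ in both cases; the norm relation $g\,\phi(g)\cdots\phi^{|\phi|-1}(g)\in C_G(n)$; the reduction of (ii) to (i) followed by ``an automorphism acting trivially on $N$ must be inner''). However, the argument has a genuine gap at exactly the point you yourself flag as the main obstacle: everything hinges on the claim that one may ``reduce to the case $\bar\phi=\id$'', i.e.\ that the automorphism induced by $\phi$ on the $p'$-group $\bar G=G/N$ is trivial, and this is asserted, not proved. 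It is a true statement, but it is a lemma in its own right: for the class-preserving case one argues that $\Span{\bar\phi}$ is a $p$-group leaving each conjugacy class of $\bar G$ invariant, each such class has size prime to $p$, so $C_{\bar G}(\bar\phi)$ meets every conjugacy class of $\bar G$, and a \emph{proper} subgroup of a finite group cannot meet every conjugacy class, whence $C_{\bar G}(\bar\phi)=\bar G$; for the Coleman case one needs a separate argument (e.g.\ via $p$-parts in $\bar G\rtimes\Span{\bar\phi}$, showing $\bar\phi$ centralizes some Sylow $q$-subgroup of $\bar G$ for every prime $q$). Without this input your two engines do not close the loop: Glauberman's lemma, applied to the $N$-classes inside $\mathrm{cl}_G(n)$, only produces \emph{some} $\phi$-fixed $N$-class in that $G$-class (orbits of $p$-power size on a set of size prime to $p$ guarantee a fixed point), not that the particular class $\mathrm{cl}_N(n)$ is fixed, which is what $\phi(n)\sim_N n$ requires for \emph{every} $n$; and the norm relation yields $\bar g\in\overline{C_G(n)}$ only once $\bar\phi=\id$ (note that $\overline{C_G(n)}$ need not even be $\bar\phi$-invariant, so one cannot ``take norms'' inside it in general).

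The final step of part (ii) is also not a valid deduction as written. From $\phi|_N=\id$ you claim the image of $\phi$ in $\Gamma/N$ ``centralizes the normal $p'$-subgroup $\bar G$'': this is a non sequitur -- acting trivially on $N$ says nothing about the action on $G/N$; it is again the unproved triviality of $\bar\phi$. And even granting $\bar\phi=\id$, ``Schur--Zassenhaus forces $\phi$ into $G\,C_\Gamma(G)$'' is not an argument: an automorphism of $p$-power order acting trivially on $N$ and on $G/N$ corresponds to a $1$-cocycle in $Z^1(G/N,Z(N))$ of $p$-power order, and it is inner precisely because $H^1(G/N,Z(N))$ is annihilated by the $p'$-number $|G/N|$ -- equivalently, because of \cite[Lemma~2]{her3}, quoted as Lemma~\ref{fixS} in the paper, which one applies here using that $\phi$ fixes a Sylow $p$-subgroup of $G$ element-wise (such a Sylow subgroup lies in $N$). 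Schur--Zassenhaus alone gives nothing at this point, since an automorphism trivial on $N$ and on $G/N$ need not be inner without the coprimality hypothesis. A minor additional point: after replacing $\phi$ by $\conj_n^{-1}\circ\phi$ its order need no longer be a power of $p$; one must pass to a suitable power (its $p$-part), as in Remark~\ref{rem:modify}.
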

\begin{proof}
	See \cite[Corollary~5]{her3} and \cite[Corollary~3]{herkim}.
\end{proof}

\begin{proposition}\label{prop:normalizer}
Let $G$ be a semidirect product $K\rtimes H$, where $K$ is an elementary abelian $p$-group, for some prime $p$. Let $\phi\in\Aut(G)$ be defined by $h\phi=h$ for all $h\in H$, and $k\phi=k^m$ for all $k\in K$ and for some fixed positive integer $m$. If $\phi\in\Aut_c(G)$, then $\phi$ induces a class-preserving automorphism of $N_G(U)/U$, for each subgroup $U$ of index $p$   in $K$.
\end{proposition}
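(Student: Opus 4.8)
The plan is to first pin down the structure of $N_G(U)$ and of the induced map, and then verify the class-preserving property one element at a time, isolating the single case that genuinely uses the hypothesis $\phi\in\Aut_c(G)$.

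First I would record the structural facts. Since $K$ is abelian it normalizes each of its subgroups, so $K\le N_G(U)$; writing $H_U=N_H(U)$ for the stabilizer of $U$ under the conjugation action of $H$ on $K$, one checks directly that $N_G(U)=K\rtimes H_U$ and $U\unlhd N_G(U)$. Because $\phi$ fixes $H$ pointwise and restricts on $K$ to the automorphism $k\mapsto k^m$ (so $\gcd(m,p)=1$), it stabilizes $U$, $K$ and $H_U$, hence $N_G(U)$, and therefore induces an automorphism $\hat\phi$ of $Q\deq N_G(U)/U\cong (K/U)\rtimes H_U$, where $V\deq K/U\cong\mathbb{Z}/p$. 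The group $H_U$ acts on $V\cong\mathbb{F}_p$ through a character $\chi\colon H_U\to\mathbb{F}_p^{\times}$, determined by $h'^{-1}vh'=\chi(h')v$, and for a lift $x=kh$ one has $\hat\phi(\bar x)=\bar k^{\,m}h$, where $\bar{\phantom{x}}$ denotes reduction modulo $U$.

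To prove $\hat\phi\in\Aut_c(Q)$, I would fix $\bar x\in Q$, lift it to $x=kh\in N_G(U)$, and note that if $k\in U$ then $\hat\phi(\bar x)=\bar x$ and there is nothing to prove; so assume $\bar k$ generates $V$. I then split according to the action of $h$ on $V$. If $\chi(h)\neq 1$, conjugating $\bar x$ by a suitable $\bar c\in V\le Q$ already produces $\hat\phi(\bar x)$, since the resulting linear equation for $\bar c$ is solvable precisely because $\chi(h)-1$ is invertible in $\mathbb{F}_p$; notably this case does not use the hypothesis. The case $\chi(h)=1$ is the heart of the matter: here any element $(\bar c,h_0)\in Q$ carrying $\bar x$ to $\hat\phi(\bar x)$ is \emph{forced} to satisfy $h_0\in C_{H_U}(h)$ and $\chi(h_0)=m$, so the whole problem reduces to exhibiting an element of $C_{H_U}(h)$ that acts on $V$ as multiplication by $m$.

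This is exactly where $\phi\in\Aut_c(G)$ enters. Writing $\phi(x)=g^{-1}xg$ with $g=k_0h_0$ and projecting along $\pi\colon G\to H\cong G/K$ forces $h_0\in C_H(h)$; reducing the $K$-component of the conjugacy relation modulo $U$ — legitimate since $\chi(h)=1$ means $[h,K]\le U$ — yields
\[
h_0^{-1}kh_0\equiv k^{\,m}\pmod U .
\]
If one could take $h_0\in H_U$, this congruence would read $\chi(h_0)\bar k=m\bar k$, hence $\chi(h_0)=m$, and conjugation by $h_0$ would finish the proof. The main obstacle is precisely that the conjugator supplied by class-preservation need not normalize $U$, and I expect this to be the crux of the whole argument. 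To overcome it I would work inside the coset $C_G(x)\,g$ of all conjugators carrying $x$ to $\phi(x)$ and show that its image under $\pi$ meets $H_U$; equivalently, that the $x$-invariant index-$p$ subgroup $U^{g^{-1}}={}^{h_0}U$ lies in the $\pi(C_G(x))$-orbit of $U$. For this I would exploit that both $U$ and $U^{g^{-1}}$ are $x$-invariant hyperplanes of $K$ containing $[x,K]$, together with the explicit description of $\pi(C_G(x))$ coming from the commutator relation characterizing $C_G(x)$, in order to replace $h_0$ by an element of $C_H(h)\cap H_U=C_{H_U}(h)$ with the same effect modulo $U$. Carrying out this reduction from $G$-conjugacy to $N_G(U)$-conjugacy is the one place where I anticipate real difficulty.
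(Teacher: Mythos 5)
Your setup and the two easy cases are correct: $N_G(U)=K\rtimes N_H(U)$, the case $k\in U$ is vacuous, the case $\chi(h)\neq 1$ follows by conjugating with an element of $V=K/U$, and in the remaining case $\chi(h)=1$ you correctly identify that what must be produced is an element $h_0\in C_{N_H(U)}(h)$ acting on $V$ as multiplication by $m$. But the proof stops precisely there: for that case you offer only a plan (``work inside the coset $C_G(x)g$ and show that its image under $\pi$ meets $H_U$''), which you yourself flag as the place of real difficulty. This is not a deferrable technicality --- it is the entire content of the proposition --- and, more seriously, the route you propose cannot work, because it extracts information from the hypothesis $\phi\in\Aut_c(G)$ only through the single relation $\phi(x)=g^{-1}xg$ for the one element $x$ under consideration. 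That single relation does not imply the conclusion.

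Here is a concrete obstruction. Let $p=7$, $K=\langle e_1\rangle\times\langle e_2\rangle\cong C_7\times C_7$, and $H=\langle b\rangle\cong C_3$ acting by $be_1b^{-1}=e_1^2$ and $be_2b^{-1}=e_1e_2$ (the matrix $\begin{pmatrix}2&1\\0&1\end{pmatrix}$ over $\mathbb{F}_7$, which has order $3$). Take $m=2$, $U=\langle e_2\rangle$, and $x=e_1$, so $h=1$ and we are in your case $\chi(h)=1$. Then $\phi(x)=e_1^2=bxb^{-1}$, so $x$ is conjugate to $\phi(x)$; the set of conjugators is the coset $Kb^{-1}$, one has $C_G(x)=K$, and both $U$ and $U^{g^{-1}}=\langle e_1^3e_2\rangle$ are $x$-invariant hyperplanes of $K$ containing $[x,K]=1$ --- every piece of data your plan proposes to exploit is present. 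Yet $N_H(U)=1$, so $N_G(U)=K$, $N_G(U)/U\cong C_7$, the image of the conjugator coset under $\pi$ is $\{b^{-1}\}$, which misses $N_H(U)$, and the induced map $\bar k\mapsto\bar k^{\,2}$ on $N_G(U)/U$ is not class-preserving: no admissible $h_0$ exists. This does not contradict the proposition, because here $\phi\notin\Aut_c(G)$ (for instance $e_2\phi=e_2^2$ is not conjugate to $e_2$, whose class is $\{e_2,e_1e_2,e_1^3e_2\}$); but it shows that any argument of the shape you describe, using class-preservation only at $x$, is provably insufficient. A correct elementary proof would have to use the hypothesis at many elements simultaneously (e.g.\ at all lifts $ux$, $u\in U$, via a counting argument). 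The paper sidesteps this local-global transfer entirely: its proof is a two-line citation, combining $\Aut_c(G)=\Aut_{\mathbb{Q}}(G)$ from \cite{jacmar} with Proposition~2.8 of \cite{her4} applied with $R=\mathbb{Q}$, and it is exactly that group-ring machinery which supplies the step your sketch is missing.
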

\begin{proof}
	Since $\Aut_c(G)=\Aut_{\mathbb{Q}}(G)$ (see \cite[Proposition~2.8]{jacmar}), setting $R=\mathbb{Q}$, the claim holds from \cite[Proposition~2.8]{her4}.
\end{proof}

\begin{lemma}{\cite[Lemma~5]{her3}}\label{lem:frattini}
	Let $p$ be a prime, and let $G$ be a finite group such that there is a non-inner automorphism of $G$ of $p$-power order which preserves the conjugacy classes of elements of $p$-power order, but induces inner automorphisms on all proper factor groups. Then the following hold.
	\begin{itemize}
		\item[(i)] The Frattini subgroup $\Phi(G)$ is a $p$-group.
		\item[(ii)] If Sylow \(p\)-subgroups of \(G\) are abelian, then \(O_p(G)=1\).
			\item[(iii)] If $O_p(G)=1$, and $C_G(N)\leq N$ for some \(1\ne N\unlhd G\), then \(\phi|_N\ne \gamma|_N\) for all \(\gamma\in\Inn(G)\).
	\end{itemize}
\end{lemma}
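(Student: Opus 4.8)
The plan is to treat all three parts by a single mechanism, applied to a suitable normal subgroup in each case. For a normal subgroup $L\unlhd G$ on which $\phi$ induces an inner automorphism, fix $g\in G$ with $\phi(x)L=\conj_g(x)L$ for all $x$ and set $\theta=\phi\circ\conj_g^{-1}$, so that $\theta$ acts trivially on $G/L$ and $[G,\theta]\le L$. The image of $\theta$ in $\Out(G)$ equals that of $\phi$ and is therefore a non-trivial $p$-element; hence it will suffice, in each case, to force $\theta$ (equivalently its $p$-part $\theta_p$) to be inner, or to have order prime to $p$, for a contradiction. The two facts I keep using are: that $\Phi(G)$ consists of non-generators, so $H\Phi(G)=G$ forces $H=G$; and the coprime-action principle that if a $p$-element $\alpha$ satisfies $[G,\alpha]\le L$ with $L$ a $p'$-group, then $\langle\alpha\rangle$ permutes each coset $xL$, a set of $p'$-size, and so fixes a point of it, giving $G=C_G(\alpha)[G,\alpha]$.

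For (i), write the nilpotent group $\Phi(G)$ as $\Phi(G)_p\times\Phi(G)_{p'}$ and suppose its characteristic $p'$-part $L=\Phi(G)_{p'}$ is non-trivial. Then $G/L$ is a proper factor group, $\phi$ is inner on it, and the associated $\theta$ lies in the kernel of $\Aut(G)\to\Aut(G/L)$; in particular $[G,\theta_p]\le L$. Since $L$ is a $p'$-group and $\theta_p$ a $p$-element, the coprime principle gives $G=C_G(\theta_p)[G,\theta_p]=C_G(\theta_p)L\le C_G(\theta_p)\Phi(G)$, so $C_G(\theta_p)=G$ and $\theta_p=\id$. Thus $\theta$ has $p'$-order, while its image in $\Out(G)$ is a non-trivial $p$-element — impossible. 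Hence $\Phi(G)_{p'}=1$ and $\Phi(G)$ is a $p$-group.

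For (iii), suppose $O_p(G)=1$ and $C_G(N)\le N$ with $1\ne N\unlhd G$, and — for contradiction — that $\phi|_N=\conj_g|_N$; put $\theta=\phi\circ\conj_g^{-1}$, so $\theta|_N=\id$. For $x\in G$, $n\in N$ one has $\theta(x)^{-1}n\,\theta(x)=\theta(x^{-1}nx)=x^{-1}nx$, the last step because $x^{-1}nx\in N$ is fixed by $\theta$; so $x\,\theta(x)^{-1}$ centralises $N$, and since $C_G(N)\le N$ this gives $x\,\theta(x)^{-1}\in N$, whence $\theta$ is trivial on $G/N$ as well and $a_x:=x^{-1}\theta(x)\in C_G(N)\cap N=Z(N)$. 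The correspondence $\theta\mapsto(a_x)$ then identifies the group of automorphisms trivial on both $N$ and $G/N$ with a subgroup of the abelian group $Z^1(G/N,Z(N))$. Finally $Z(N)$ is an abelian normal subgroup of $G$, so its Sylow $p$-subgroup is normal in $G$ and hence trivial as $O_p(G)=1$; thus $Z(N)$, and with it $Z^1(G/N,Z(N))$, is a $p'$-group, so $\theta$ has $p'$-order, again contradicting that its image in $\Out(G)$ is a non-trivial $p$-element. No such $g$ can exist.

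Part (ii) is where the real work lies, and where the hypothesis that $\phi$ preserves the class of every $p$-element (rather than merely inducing inner automorphisms on factors) is indispensable. Assume the Sylow $p$-subgroups are abelian but $O_p(G)\ne 1$; then $M:=O_p(G)$ lies in the centre of every Sylow $p$-subgroup $P$. With $L=M$, the automorphism $\theta=\phi\circ\conj_g^{-1}$ is trivial on $G/M$ and preserves $P$, so on the abelian group $P$ the map $x\mapsto x^{-1}\theta(x)\in M$ is a homomorphism. Because $\theta$ preserves $p$-element classes and, by Burnside's theorem, $N_G(P)$ controls $p$-fusion in the abelian $P$, each $\theta(x)$ is $N_G(P)$-conjugate to $x$, and I would use this to conclude that $\theta$ fixes $P$ pointwise. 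Once $\theta|_P=\id$ the proof finishes quickly: $\theta$ is then trivial on $M$ and on $G/M$ and defines a class in $H^1(G/M,M)$ whose restriction to the Sylow subgroup $P/M$ vanishes; since for the $p$-group $M$ this restriction is injective, the class is zero, $\theta$ is inner, and hence $\phi$ is inner — the required contradiction, forcing $O_p(G)=1$. The step I expect to be the main obstacle is exactly the passage from ``$\theta(x)$ is $N_G(P)$-conjugate to $x$ for each $x\in P$'' to ``$\theta|_P=\id$'': this is a genuine fusion-theoretic argument in the abelian Sylow subgroup, and it is the only place in the lemma where class-preservation is used in an essential way, the formal parts (i) and (iii) reducing uniformly to the ``$\theta$ has $p'$-order versus non-trivial $p$-element'' contradiction.
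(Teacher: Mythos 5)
First, a remark on the comparison itself: the paper does not prove this lemma --- it is quoted verbatim from Hertweck \cite[Lemma~5]{her3} with no in-paper argument --- so your attempt must be judged on its own merits. On that basis, parts (i) and (iii) of your proposal are correct and complete. The coprime fixed-point argument (a $p$-element $\theta_p$ with $[G,\theta_p]\le L$, $L$ a $p'$-group, gives $G=C_G(\theta_p)L$) combined with the non-generator property of $\Phi(G)$ settles (i); and the injective homomorphism from the group of automorphisms trivial on both $N$ and $G/N$ into the abelian group $Z^1(G/N,Z(N))$, together with the observation that $O_p(Z(N))\unlhd G$ forces $Z(N)$ to be a $p'$-group, settles (iii). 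Both reduce cleanly to your ``image in $\Out(G)$ is a nontrivial $p$-element'' contradiction, and the small steps you leave implicit (e.g.\ $x^{-1}\theta(x)\in C_G(N)$ via normality of $C_G(N)$ in $G$) all check out.

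Part (ii), however, contains a genuine gap, and it is exactly the step you flagged; the problem is worse than ``unproven'': that step is \emph{false} as an implication from the properties you allow yourself to use. Take $G=A_4$ and $p=2$, so $P=M=O_2(G)\cong C_2\times C_2$ is an abelian Sylow $2$-subgroup, and let $\theta=\conj_{(123)}$. Then $\theta$ is trivial on $G/M$ (because $[G,G]=M$), it preserves conjugacy classes of $2$-elements (it is inner), it stabilizes $P$, every $\theta(x)$ with $x\in P$ is $N_G(P)$-conjugate to $x$, and $x\mapsto x^{-1}\theta(x)$ is a homomorphism $P\to M$ --- yet $\theta$ permutes the three involutions of $P$ cyclically, so $\theta|_P\ne\id$. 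Hence no argument using only ``trivial on $G/M$, class-preservation on $p$-elements, and Burnside fusion in the abelian $P$'' can yield $\theta|_P=\id$. Note also that your cohomological endgame buys no slack: since $P$ is abelian and $M\le P$, the conjugation action of $P/M$ on $M$ is trivial, so $B^1(P/M,M)=1$ and vanishing of the restricted class is \emph{literally equivalent} to $\theta|_P=\id$; the entire burden of (ii) sits on the step you could not do. What is missing is global input: one must exploit that $\phi$ is non-inner yet induces inner automorphisms on \emph{every} proper quotient (not just on $G/M$) to exclude configurations like the $A_4$ one --- indeed $A_4$ is not a counterexample to the lemma only because its outer automorphism inverts $G/M\cong C_3$ and so is not inner on that quotient. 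As you yourself say, (ii) is where the real work lies, and in the proposal that work is not done.
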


Finally, we will need the following well-known lemma, whose proof can be found in \cite[Lemma~2]{her3}.

\begin{lemma}\label{fixS}
	Let \(p\) be a prime, and an automorphism of \(G\) of \(p\)-power order. Assume further that there is \(N\unlhd G\) such that \(\phi\) fixes all elements of \(N\),and that \(\phi\) induces the identity on \(G/N\). Then \(\phi\) induces the identity on \(G/O_p(Z(N))\). If \(\phi\) fixes in addition a Sylow \(p\)-subgroup of \(G\) element-wise, then \(\phi\) is an inner automorphism.
\end{lemma}

Moreover, throughout the paper, the following remark will play an important role.

\begin{remark}{\cite[Remark~1]{her3}}\label{rem:modify}
	Let \(\phi\) be a non-inner (class-preserving) automorphism of \(G\), of order a power of \(p\). Assume that
	\begin{itemize}
		\item[(i)] there is \(U\leq G\) and \(\sigma\in\Inn(G)\) with \(\phi|_U=\sigma|_U\), or that
		\item[(ii)] there is \(N\unlhd G\) with \(N\phi=N\) such that \(\phi\) induces an inner automorphism on \(G/N\).
	\end{itemize}
	Then there is \(\gamma\in Inn(G)\) such that (i) \(\phi\gamma|_U= \id\) or (ii) \(\phi\gamma\) induces the identity on \(G/N\), and \(\phi\gamma\) is still a non-inner (class-preserving) automorphism of order a power of \(p\). Sometimes we will use this fact without any further comment, just saying that we ``modify by an inner automorphism'', when we intend to replace \(\phi\) by \(\phi\gamma\).
\end{remark}

\begin{notation}\label{notation}
From now on, we will consider $G$ a finite group with semidihedral Sylow 2-subgroups. Let us recall that a semidihedral group $S$ of order $2^n$ is a group with the following presentation
\[
SD_{2^n}=\langle a,b \mid a^{2^{n-1}}=b^2=1, bab=a^{2^{n-2}-1}\rangle ,
\]
or, in other words, is the semidirect product $C_{2^{n-1}}\rtimes C_2$, where $C_2$ acts on $C_{2^{n-1}}$ by $x\mapsto x^{2^{n-2}-1}$. In particular, all the elements in $S\setminus \langle a\rangle $ are \(a^{2i}b\), for \(0\leq i \leq 2^{n-2}-1\), of order  2, or \(a^{2i+1}b\), for \(0\leq i \leq 2^{n-2}-1\), of order 4. Moreover, we have the following useful properties:
\begin{itemize}
\item[(i)] $S$ possesses precisely three maximal subgroups, respectively the cyclic group $C=\Span{a}$, the generalized quaternion group $Q=\Span{a^2,ab}$, and the dihedral group $D=\Span{a^2,b}$. More generally, the subgroups of $S$ are cyclic, generalized quaternion, dihedral or Klein four-groups;
\item[(ii)]  the center of \(S\) is $Z(S)=\Span{a^{2^{n-2}}}$, which has order 2, and $S/Z(S)\cong D_{2^{n-2}}$, the dihedral group of order $2^{n-1}$;
\item[(iii)] the maximal cyclic subgroups of $C$, $D$ and $Q$ all concide with \(\Phi(S)=[S,S]=<a^2>\cong C_{2^{n-2}}\), and we have \(S/\Phi(S)=S/[S,S]\cong C_2 \times C_2\) ;
\item[(iv)] the  normal subgroups of \(S\) are the three maximal normal subgroups $C$, $D$ and $Q$, together with all the subgroups of $C$. In particular, every proper normal subgroup of \(S\) is either maximal or is cyclic and contained in \([S,S]\).
\end{itemize}

\noindent Notice that, by definition, $n\geq4$, or, in other words, $S$ has order at least $16$. For more details on the properties of a semidihedral group, see, e.g.,  \cite[Lemma~1, p. 9]{quasidih}.
\end{notation}

\section{Proof of  Theorem~\ref{main}}
In this section, we prove Theorem~\ref{main}, beginning with several lemmas needed to show that any finite group with a semidihedral Sylow 2-subgroup satisfies Property~\eqref{property}. 

Throughout this section, we assume that \(G\) is a minimal counterexample to Theorem~\ref{main}. In particular,  we suppose that \(G\) admits a class-preserving, non-inner Coleman automorphism  of order a power of 2, which we denote by \(\phi\), without further mention. Since a Sylow 2-subgroup of \(G\) is semidihedral, and its subgroups, together with the subgroup itself, are cyclic, dihedral, or generalized quaternion, then the Sylow 2-subgroups of subgroups and quotients of \(G\) are of these types. Hence, by the minimality of 
\(G\), \cite[Proposition~4.7]{her4}, and the main theorem of \cite{her2}, proper quotient, and proper subquotient of \(G\) satisfies Property \eqref{property}.

Let us denote by \(S\) a Sylow 2-subgroup of \(G\), and let \(a,y\in G\) be such that \(\Span{a}\) is the maximal cyclic subgroup of \(S\) not containing \(y)\). In other words, \(y\) has order 2 or 4.

\begin{remark}\label{Snonorm}
	Notice that any proper quotient \(G/N\) of \(G\) must have even order. Otherwise, \(N\) would contain a Sylow 2-subgroup \(S = SD_{2^n}\), and by Lemma~\ref{lem1}, since \(\phi\) is assumed to be a non-inner class-preserving Coleman automorphism of order a power of 2, \(\phi\) would induce a class-preserving Coleman automorphism on \(N\) of order a power of 2. This contradicts the minimality of \(G\). In particular, no proper normal subgroup of \(G\)  contains \(S\).
\end{remark}

Let \(F=F(G)\) denote the \emph{Fitting subgroup}  of \(G\), and let \(F^*=F^*(G)=FE\) be  the \emph{generalized Fitting subgroup} \(\) of \(G\), where \(E=E(G)\) is the \emph{layer} of \(G\). See \cite[Chapter~1, Section~4]{gorsimple} for more details on the properties of \(F\) and \(F^*\) that we will use in this paper. In particular, we will use that \(F=\prod_{p\in\pi(G)}O_p(G)\), \(F^*\) is a self centralizing subgroup and is a central product of \(F\) and \(E\), and 
\(Z(E)\) is an abelian normal subgroup of \(G\) contained in \(F\).

\medskip

The proof of Theorem~\ref{main} relies on several auxiliary lemmas.

\begin{lemma}\label{lemO2'}
	If \(G\) is a minimal counterexample to Theorem~\ref{main}, then \(O_{2'}(F)\) is non-trivial.
\end{lemma}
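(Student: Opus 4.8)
The plan is to argue by contradiction: assume $O_{2'}(F)=1$ and deduce that $\phi$ is inner, against hypothesis. First I would unpack the assumption. Since $F=F(G)=\prod_p O_p(G)$ is nilpotent, $O_{2'}(F)=\prod_{p\neq 2}O_p(G)$, so $O_{2'}(F)=1$ forces $F=O_2(G)$ to be a $2$-group. I would then upgrade this to $O_{2'}(G)=1$: the subgroup $O_{2'}(G)$ has odd order, hence is solvable by Feit--Thompson, so its Fitting subgroup $F(O_{2'}(G))$ is nontrivial whenever $O_{2'}(G)\neq 1$; but $F(O_{2'}(G))$ is characteristic in $O_{2'}(G)\unlhd G$ and is a nilpotent $2'$-group, whence $F(O_{2'}(G))\leq O_{2'}(F)=1$ and therefore $O_{2'}(G)=1$.

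Next comes the standard minimal-counterexample reduction. For every $1\neq N\unlhd G$ the quotient $G/N$ is a proper subquotient of $G$ and so satisfies Property $\mathcal{W}$. Since $\phi$ is class-preserving it fixes every conjugacy class, hence normalizes $N$ and induces on $G/N$ a class-preserving Coleman automorphism of $2$-power order; by Property $\mathcal{W}$ this induced automorphism lies in $\Out_c(G/N)\cap\Outc(G/N)$, which is of odd order, so it is inner. Thus $\phi$ induces an inner automorphism on every proper factor group, and the hypotheses of Lemma~\ref{lem:frattini} hold with $p=2$; in particular $\Phi(G)$ is a $2$-group.

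Then I would bring in the generalized Fitting subgroup $F^*=F^*(G)=O_2(G)\,E$, exploiting that it is self-centralizing, $C_G(F^*)=Z(F^*)\leq F^*$, and that each component $L$ of $E$ is quasisimple with $Z(L)\leq Z(E)\leq F=O_2(G)$, so $Z(L)$ is a $2$-group. Because each component is subnormal in $G$, its Sylow $2$-subgroups embed in $S=SD_{2^n}$ and are therefore cyclic, (generalized) quaternion, dihedral, semidihedral, or Klein four; as no nonabelian simple group has generalized quaternion Sylow $2$-subgroups by Brauer--Suzuki, the simple sections $L/Z(L)$ fall into the short list supplied by the classification of simple groups with dihedral, semidihedral, or four-group Sylow $2$-subgroups. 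The aim is to apply Lemma~\ref{lem:frattini}(iii) to $N=F^*$ to conclude that $\phi|_{F^*}$ disagrees with every inner automorphism, and then to contradict this by showing --- using that $F^*$ and the almost-simple sections $N_G(L)/C_G(L)$ are proper subquotients satisfying Property $\mathcal{W}$ (via \cite[Proposition~4.7]{her4} and the main theorem of \cite{her2}), together with Lemma~\ref{fixS} --- that $\phi$ in fact restricts to an inner automorphism on $F^*$ and, after modifying by an inner automorphism (Remark~\ref{rem:modify}), is inner on all of $G$.

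The hardest part will be this concluding step. First, Lemma~\ref{lem:frattini}(iii) requires $O_2(G)=1$, so the genuinely $2$-constrained case $E=1$, where $F^*=O_2(G)$ is a nontrivial self-centralizing $2$-group, must be handled separately, presumably by reducing to a solvable-type analysis. Second, and more seriously, one has to glue the inner-ness data coming from the individual almost-simple constituents into a \emph{single} inner automorphism of $G$, while tracking the central factor $Z(E)$ sitting inside $O_2(G)$ and the semidihedral fusion in $S$. Controlling this central-product bookkeeping so that Lemma~\ref{fixS} can be applied to force $\phi$ inner is where I expect the real difficulty to lie.
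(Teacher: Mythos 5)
Your opening reduction is sound and matches the paper's: $O_{2'}(F)=1$ forces $F=O_2(G)$ to be a $2$-group, and minimality of $G$ makes $\phi$ inner on every proper quotient. But after that the proposal never closes any case, and the tool you build the argument around fails exactly where the work lies. Lemma~\ref{lem:frattini}(iii) requires $O_2(G)=1$, while under your contradiction hypothesis $O_2(G)=F$; so that lemma is available only when $F=1$, i.e.\ only in the case where $E\neq 1$ and $Z(E)=1$ (there $F^*=E$ is a product of centerless components, and your route via the classification and \cite[Theorem~14]{herkim} does essentially what the paper does). It is inapplicable in the two remaining cases: (a) $E=1$, where $F^*=F=O_2(G)\neq 1$ is a self-centralizing $2$-group, and (b) $E\neq 1$ with $Z(E)\neq 1$, where $1\neq Z(E)\leq F=O_2(G)$. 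You explicitly defer case (a) to ``a solvable-type analysis'' and case (b) to ``central-product bookkeeping,'' but those deferred steps \emph{are} the proof: as written, no contradiction is ever derived.

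The missing idea is to exploit the Coleman property of $\phi$ on $S$ itself rather than Lemma~\ref{lem:frattini}(iii). Since no proper normal subgroup of $G$ contains $S$ (Remark~\ref{Snonorm}), one may modify $\phi$ by an inner automorphism so that it fixes $S$ element-wise; then $F=O_2(G)\leq S$ is fixed pointwise. In case (a), $C_G(F)\leq F$ gives that $\phi$ induces the identity on $G/F$, and Lemma~\ref{fixS} makes $\phi$ inner --- a two-line contradiction, with no solvable-type analysis needed. In case (b), the semidihedral geometry pins everything down: each component has non-cyclic Sylow $2$-subgroups (Burnside's normal $p$-complement theorem plus Feit--Thompson), and since a direct product of two non-cyclic $2$-groups cannot embed in $S$, the layer $E$ is a single quasisimple group --- so your worry about gluing inner-ness data across several components is vacuous. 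One then gets $Z(E)=Z(S)\cong C_2$, $F=Z(E)$, $F^*=E$ with generalized quaternion Sylow $2$-subgroup $P$; after modifying $\phi$ so that it fixes $E$ pointwise (Gorenstein--Walter applied to $E/Z(E)$ together with \cite[Theorem~14]{herkim}), the Coleman condition yields $s\in S$ with $\phi|_S=\conj_s|_S$ and $[P,s]=1$, and the decisive trick is that $s\notin P$ would make $\Span{P,s}$ a non-abelian subgroup of $S$ with center of order at least $4$, which is impossible inside a semidihedral group; hence $s\in Z(P)=Z(S)$, so $\phi$ fixes $S$ pointwise and Lemma~\ref{fixS} applies again. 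Without these two mechanisms (pointwise fixing of $S$ via Remark~\ref{rem:modify}, and the center-of-order-$4$ obstruction in $S$), your outline cannot be completed along the lines you propose.
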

\begin{proof}
Suppose, by way of contradiction, that \(F\) is a 2-group; that is, \(F=O_2(G)\). By Remark~\ref{rem:modify}, we can modify \(\phi\) by an inner automorphism so that it fixes \(S\) element-wise.

If \(F=F^*\), then \(C_G(F)\leq F\). Since \(F\) is a 2-group, it follows that \(F\leq S\) and so \(F\) is fixed  element-wise by \(\phi\). Therefore, \(\phi\) induces the identity on \(G/F\), and hence, by Lemma~\ref{fixS}, \(\phi\in\Inn(G)\), which is a contradiction.

Hence, we can assume \(E\ne 1\). Suppose \(Z(E)=1\), it then follows that \(F^*=F\times E\). If \(E\) contains a cyclic Sylow 2-subgroup, then by the Burnside's normal \(p\)-complement theorem \cite[Theorem II, Section 243]{burnside}, it has a normal 2-complement, and hence, by Feit-Thompson theorem \cite{feitho}, it is solvable. This is impossible since \(E\) is a product of non-abelian simple groups. Therefore, \(E\) cannot contain a cyclic Sylow 2-subgroup. Since a Sylow 2-subgroup of \(F^*=F\times E\) is a subgroup of the semidihedral group \(S\), and a Sylow 2-subgroup of \(E\) cannot be cyclic, it follows that \(O_2(G)=F=1\), and \(F^*=E\) is a non-abelian simple group. In \cite{brauer,brauersuzuki,suzuki}, Brauer and Suzuki  proved that a generalized quaternion cannot be a Sylow 2-subgroup of a finite simple group. Moreover, by Remark~\ref{Snonorm}, a Sylow 2-subgroup of \(E\) cannot be a semidihedral group, since \(E\) is a normal subgroup of \(G\). Therefore a Sylow 2-subgroup of \(E\) can be only a dihedral group. 
In \cite{gorwal} it was proved that  the only finite simple groups with a dihedral Sylow 2-subgroup are \(\mathrm{PSL}_2(q)\), and \(\mathrm{A}_7\). Noting that every Coleman automorphism of a group is, up to an inner automorphism, a \(p\)-central automorphism for any prime \(p\) dividing the order of the group, it follows from \cite[Theorem~14]{herkim} that there exists a prime \(p\in\pi(E)\) such that all \(p\)-central automorphisms of \(E\) are inner. So, also in this case, we can modify \(\phi\) so that \(E=F^*\) is fixed element-wise. Consequently, \(\phi\) induces the identity on \(G/F^*\). Since \(Z(F^*)=Z(E)=1\), it follows that \(\phi=\id\), which is a contradiction. Thus we have \(1\ne Z(E)\leq F\) is a 2-group.

Let \(P\) be a Sylow 2-subgroup of \(E<F^*=FE\). As shown above,  \(P\) cannot be cyclic. Moreover, \(P\) cannot be  a Klein four-group, since it is a subgroup of a semidihedral group and \(F\) is a 2-group, hence it is non-abelian. Consequently, \(Z(E)=Z(P)\cong C_2\). On the  one hand, a Sylow 2-subgroup \(P/Z(P)\) of \(F^*/Z(E)\) is a dihedral group, since \(P\) is non-cyclic subgroup of a semidihedral group. On the other hand, since, by Remark~\ref{Snonorm}, a Sylow 2-subgroup of \(G\) cannot be a normal subgroup, we conclude that if \(F\ne 1\), then \(O_2(G)=F\) must be either a cyclic group or a dihedral group or a generalized quaternion group. We now show that $F^* = E$.
Recall that $F^* = FE$ with $[F, E] = 1$ and $F \cap E = Z(E)$.
Since $F = O_2(G)$ is a $2$-group, a Sylow $2$-subgroup of $F$ is $F$ itself.
Now, a Sylow $2$-subgroup of $F^*$ is of the form $F \cdot P$,
where $P$ is a Sylow $2$-subgroup of $E$ and $F \cap P = Z(E) \cong C_2$,
and this must be (isomorphic to) a subgroup of the semidihedral group $S$.
Since $[F, E] = 1$, we have $[F, P] = 1$, and so
$F \leq C_S(P)$. However, since $P$ is a non-cyclic subgroup of $S$
and $Z(S) \cong C_2$, it follows that $C_S(P) = Z(S) = Z(P)$.
Therefore $F \leq Z(P) = Z(E)$. Since we have already established
that $1 \neq Z(E) \leq F$, we conclude that $F = Z(E)$.
In particular, $F \leq E$, and hence $F^* = FE = E$.
As above, we can modify \(\phi\) so that it fixes \(F^*\) element-wise. Moreover, we can suppose that \(\phi\) stabilizes \(S\) and that \(P\leq S\). Hence, since \(\phi\) is a Coleman automorphism, there exists \(s\in S\) such that \(\phi|_S=\conj_s|_S\) and in particular \([P,s]=1\). If \(s\not\in P\), then \(\Span{P,s}\) is a non-abelian 2-group  with center of order at least 4, which is a contradiction. Otherwise, if \(s\in P\), then \(s\in Z(P)=Z(E)=Z(F^*)\). This implies that we can assume  \(\phi|_{F^*}=\id\) and also  \(\phi|_S=\id\), since \(s\in Z(P)=Z(S)\) and \(\phi|_S=\conj_s|_S\). By Lemma~\ref{fixS}, it follows that \(\phi\) is an inner automorphism, which is a contradiction. Therefore, \(O_{2'}(F)\ne 1\).
\end{proof}

\begin{lemma}\label{lem2}
	If \(G\) is a minimal counterexample to Theorem~\ref{main}, then the Frattini subgroup \(\Phi(G)\) of \(G\) is a 2-group. In particular, the subgroup \(O_{2'}(F)\) is a product of abelian minimal normal subgroup of \(G\) and it has a complement in \(G\).
\end{lemma}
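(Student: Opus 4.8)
The plan is to split the statement into its two assertions and treat them in order, the first feeding the second.

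First I would establish that $\Phi(G)$ is a $2$-group by verifying the hypotheses of Lemma~\ref{lem:frattini}(i) with $p=2$. The automorphism $\phi$ is non-inner, of $2$-power order, and class-preserving, so it certainly preserves the conjugacy classes of elements of $2$-power order. The only point that needs checking is that $\phi$ induces an inner automorphism on every proper factor group $G/N$. Since $\phi$ is class-preserving it fixes every conjugacy class, hence every normal subgroup $N$ is $\phi$-invariant and $\phi$ descends to an automorphism $\bar\phi$ of $G/N$. This $\bar\phi$ is again class-preserving, and it is Coleman because every Sylow subgroup of $G/N$ is the image of some Sylow subgroup $S_q$ of $G$ and the relation $\phi|_{S_q}=\conj_g|_{S_q}$ descends to $\conj_{\bar g}$; its order is moreover a power of $2$. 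For $N\ne 1$ the proper quotient $G/N$ satisfies Property~$\mathcal{W}$, so such a class-preserving Coleman automorphism of $2$-power order must be inner, i.e.\ $\bar\phi\in\Inn(G/N)$. Lemma~\ref{lem:frattini}(i) then yields that $\Phi(G)$ is a $2$-group.

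Next I would extract the structure of $A\deq O_{2'}(F)$. For an odd prime $p$ the subgroup $O_p(G)$ is normal in $G$, so $\Phi(O_p(G))\le\Phi(G)$; as $\Phi(O_p(G))$ is a $p$-group while $\Phi(G)$ is a $2$-group, we get $\Phi(O_p(G))=1$, and each $O_p(G)$ is elementary abelian. Hence $A=\prod_{p\ne 2}O_p(G)$ is abelian, and by coprimality $A\cap\Phi(G)=1$. This triviality of $A\cap\Phi(G)$ is the key consequence on which both remaining claims rest. For the decomposition into minimal normal subgroups I would argue prime by prime, regarding $V\deq O_p(G)$ as an $\mathbb{F}_p G$-module, whose submodules are exactly the normal subgroups of $G$ contained in $V$. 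If $M$ is a maximal subgroup of $G$ with $V\not\le M$ then $VM=G$, and a short Dedekind argument shows $V\cap M$ is a maximal submodule of $V$; intersecting over all such $M$ gives $\operatorname{Rad}_{\mathbb{F}_p G}(V)\le V\cap\Phi(G)=1$. A module of finite length with trivial radical is semisimple, so $V$ is a direct product of irreducible submodules, i.e.\ of minimal normal subgroups of $G$; taking the product over all odd $p$ exhibits $A$ as a product of abelian minimal normal subgroups of $G$. Finally, for the complement I would choose $H\le G$ minimal subject to $AH=G$: then $A\cap H$ is normal in $G$, and a maximal-subgroup analysis (if $H\le M$ then $A\cap H\le M$; if $H\not\le M$ then $H\cap M$ is maximal in $H$, and minimality of $H$ forces $A\cap H\le\Phi(H)\le H\cap M\le M$) shows $A\cap H\le\Phi(G)$, whence $A\cap H\le A\cap\Phi(G)=1$ and $H$ is a complement to $A$ in $G$.

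The step I expect to be the main obstacle is the second assertion rather than the first: once $\Phi(G)$ is known to be a $2$-group, the equality $A\cap\Phi(G)=1$ is immediate, but converting it into the precise statement ``product of minimal normal subgroups \emph{with} a complement'' requires the two independent module-theoretic arguments above — complete reducibility via the vanishing of the radical, and the existence of a complement via a minimal transversal. One must also take care that the irreducible pieces produced are genuinely minimal normal in $G$, and not merely normal in $F$.
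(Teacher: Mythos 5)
Your first half follows the paper's own route: the paper's proof of this lemma likewise consists of observing that $G$ satisfies the hypotheses of Lemma~\ref{lem:frattini}, and your verification that $\phi$ descends to a class-preserving Coleman automorphism of $2$-power order on each proper quotient $G/N$, which must then be inner because $G/N$ satisfies Property~$\mathcal{W}$, is exactly the content the paper leaves implicit. For the second half the paper simply cites Huppert (Gasch\"utz's theorems: an abelian normal subgroup meeting $\Phi(G)$ trivially is a product of minimal normal subgroups and is complemented), whereas you reprove these facts; your complete-reducibility argument via $\operatorname{Rad}(V)\le V\cap\Phi(G)=1$ is correct, including the check that the irreducible summands are minimal normal in $G$.

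Your complement argument, however, contains a false step. The claim ``if $H\not\le M$ then $H\cap M$ is maximal in $H$'' is not true in general, and with it the chain $\Phi(H)\le H\cap M\le M$ collapses. Concretely, take $G=C_5\rtimes C_4$ with faithful action, $A=C_5$, and $H$ a Sylow $2$-subgroup, which is minimal subject to $AH=G$; if $M$ is a different Sylow $2$-subgroup (these are maximal in $G$), then $H\not\le M$ but $H\cap M=1$, which is not maximal in $H\cong C_4$, and indeed $\Phi(H)\cong C_2\not\le H\cap M$. Your claim does hold when $A\le M$ (Dedekind plus minimality of $H$), but precisely in the remaining case $A\not\le M$ it can fail. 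The repair is standard and shorter than your case split: you already have $A\cap H\trianglelefteq G$ and, by minimality of $H$, $A\cap H\le\Phi(H)$; if $A\cap H\not\le M$ for some maximal subgroup $M$, then $(A\cap H)M=G$, so by Dedekind $H=(A\cap H)(H\cap M)\le\Phi(H)(H\cap M)$, and the non-generator property of $\Phi(H)$ forces $H=H\cap M\le M$, contradicting $A\cap H\not\le M$. Hence $A\cap H\le\Phi(G)$, so $A\cap H\le A\cap\Phi(G)=1$ and you conclude as intended; alternatively one can, as the paper does, quote Gasch\"utz directly.
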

\begin{proof}
	Note that \(G\) satisfies the hypotheses of Lemma~\ref{lem:frattini}, since it is a counterexample of Theorem~\ref{main}. Therefore \(\Phi(G)\) is a 2-group. By well-known properties of the Frattini subgroup and the Fitting subgroup (see, e.g. \cite[Chapter III, 3.14, 4.4, 4.5]{Hupp}),  since \(\Phi(G)\) is a 2-group, and so \(O_{2'}(G)\cap \Phi(G)=1\),  the second part of the claim also follows.
\end{proof}

\begin{lemma}\label{lem2bis}
	Let \(G\) be a minimal counterexample to Theorem~\ref{main}. If \(M\) is a minimal normal subgroup of \(G\) contained in \(O_{2'}(F)\), then \(M\) has a complement \(K\) in \(G\), and it is possible to modify \(\phi\) so that \(K\) is fixed  element-wise by \(\phi\). Moreover, if \(N \ne M\) is a normal subgroup of \(G\), then there exists a 2-element \(k \in K\) such that
	\begin{itemize}
		\item \(k\) inverts the elements of \(M\);
		\item the image of \(k\) in \(G/MN\) is a central involution; and
		\item \(\phi|_M = \conj_k|_M\).
	\end{itemize}
\end{lemma}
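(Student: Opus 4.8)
The plan is to establish the three assertions successively, reserving the description of $\phi|_M$ for last, as it is the delicate point.

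\emph{The complement.} Since $M\leq O_{2'}(G)=O_{2'}(F)$ is a minimal normal subgroup of $G$ of odd order, it is elementary abelian of exponent $p$ for some odd prime $p$ (cf.\ the structure of $O_{2'}(F)$ in Lemma~\ref{lem2}). By that same lemma $\Phi(G)$ is a $2$-group, so $M\cap\Phi(G)=1$ and $M\not\leq\Phi(G)$. Picking a maximal subgroup $L$ with $M\not\leq L$ we get $ML=G$; as $M$ is abelian, $M\cap L$ is normalised by both $L$ and $M$, hence is normal in $G$, and minimality of $M$ forces $M\cap L=1$. Thus $K:=L$ is a complement, and since $\lvert M\rvert$ is odd, $K$ contains a full (semidihedral) Sylow $2$-subgroup $S$ of $G$.

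\emph{Fixing $K$ pointwise.} Being normal, $M$ is a union of conjugacy classes, so $M\phi=M$ and $\phi$ induces on the proper quotient $G/M$ a class-preserving Coleman automorphism of $2$-power order; by Property $\mathcal{W}$ this is inner, and by Remark~\ref{rem:modify}(ii) we may modify $\phi$ by an inner automorphism so that it induces the identity on $G/M$. Now $\phi$ permutes the complements of $M$ in $G$ (if $C$ is a complement then so is $\phi(C)$, since $M\phi=M$), and these complements are parametrised by the $\mathbb{F}_p$-space of derivations $K\to M$, so their number is a power of $p$; as $\langle\phi\rangle$ is a $2$-group it fixes one of them, which we take to be $K$. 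For $k\in K$ we then have $\phi(k)\in K$ and $\phi(k)\in kM$, whence $\phi(k)\in K\cap kM=\{k\}$; that is, $\phi|_K=\id$.

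\emph{The $2$-element and the scalar.} As $\phi$ is a Coleman automorphism at $p$ and $M$ lies in every Sylow $p$-subgroup, $\phi|_M=\conj_g|_M$ for some $g\in G$; replacing $g$ by its $2$-part (legitimate since $\conj_g|_M=\phi|_M$ has $2$-power order) we may take $g$ a $2$-element. Writing $\overline{G}=G/C_G(M)$ and noting $G=MK$ with $M\leq C_G(M)$, the map $K\to\overline{G}$ is onto, so $\phi|_M$ is realised by a $2$-element $k\in K$, giving the third bullet. Moreover, for $m\in M$ and $x\in G$, applying $\phi$ to $x^{-1}mx\in M$ and using $\phi(x)\equiv x\ (\mathrm{mod}\ M)$ together with the commutativity of $M$ yields $k^{-1}x^{-1}mxk=x^{-1}k^{-1}mkx$; that is, $\conj_k$ commutes with every $\conj_x$ on $M$, so the image of $k$ lies in $Z(\overline{G})$. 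By Schur's Lemma applied to the irreducible $\mathbb{F}_pG$-module $M$, $\conj_k|_M$ is then a scalar $\lambda\in\mathbb{F}_{p^e}^{\times}$ of $2$-power order, where $\mathbb{F}_{p^e}=\mathrm{End}_{\mathbb{F}_pG}(M)$.

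\emph{The main obstacle.} It remains to show $\lambda=-1$ (so that $k$ inverts $M$) and that the image of $k$ in $G/MN$ is a central involution. For a proper quotient $G/N$ (that is, $N\neq 1$), minimality makes $\phi$ inner there, equal to conjugation by a $2$-element $\overline{c}$; since $\phi$ is trivial on $G/MN$, the element $\overline{c}$ is central modulo $MN$, which gives the second bullet for such $N$. The crux is the borderline case $N=1$, where $G/MN=G/M$ and one must show directly that, modulo $M$, the element $k$ is a central involution of $K$. This cannot be read off from ``$\phi$ inner on proper quotients'', and is where the semidihedral hypothesis is essential: a central $2$-element of $K$ lies in $Z(S)=\Span{a^{2^{n-2}}}$, which has order $2$, so once $k$ is forced into $Z(K)$ modulo $M$ it must coincide with the central involution $z$, and then $\lambda=\overline{z}$ has order dividing $2$; being nontrivial (else $\phi|_M=\id$ and, with $\phi|_K=\id$, $\phi=\id$), it equals $-1$. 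I expect the hardest step to be precisely locating $k$ in $Z(K)$ modulo $M$ rather than merely modulo $C_G(M)$, which requires exploiting the class-preserving condition on the mixed elements $mk'$ (with $m\in M$, $k'\in K$) together with the fusion pattern of the semidihedral group and, where needed, the reduction $\phi^2\in\Inn(G)$.
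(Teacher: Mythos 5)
Your preparatory steps are correct, and two of them are genuinely nicer than the paper's: you obtain a $\phi$-invariant complement by letting the $2$-group $\Span{\phi}$ act on the set of complements of $M$, which has $p$-power (hence odd) cardinality, where the paper instead cites \cite[Proposition~1]{her3}; and you obtain the third bullet, $\phi|_M=\conj_k|_M$ with $k\in K$ a $2$-element, directly from the Coleman property, where the paper extracts $k$ from the inner automorphism induced on $G/N$. The Schur-lemma observation that $\conj_k|_M$ is a scalar $\lambda$ of $2$-power order is also correct. But the proposal stops short of the lemma's actual content: you never prove $\lambda=-1$ (the first bullet), nor that the image of $k$ in $G/MN$ has order two, and your final paragraph explicitly defers both to an expected future argument. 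That is a genuine gap, not a routine verification. There is a further mismatch in the second bullet even for $N\ne 1$: what you prove is that the element $\overline{c}$ realizing $\phi$ on $G/N$ is central modulo $MN$; since your $k$ was produced independently, all you know relating the two is $kc^{-1}\in C_G(M)$, and $C_G(M)\not\leq MN$ in general, so centrality does not transfer to $k$. The paper avoids this by \emph{defining} $k$ through $G/N$ and then using $M\cap N=1$ to get $\phi|_M=\conj_k|_M$ on the nose. Note also that $N$ is implicitly nontrivial with $M\cap N=1$ --- for $N=1$ even the paper's appeal to minimality fails, and every later application takes $N\in\{M_{\hat{i}},\,E,\,O_2(G)\}$ --- so your ``borderline case $N=1$'' lies outside the intended statement and is not the crux.

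The gap is closed much more cheaply than the fusion analysis you anticipate. The paper's route: since $\phi$ is trivial on $G/M$ and $M$ is abelian, $C_M(\phi)=C_M(k)$ is normal in $G$, hence by minimality of $M$ equals $1$ or $M$, and it is not $M$ (else $\phi=\id$ on $G=MK$); the paper then argues that $\phi^2|_M=\id$ using that $\lvert M\rvert$ is odd, and an automorphism of order at most two acting fixed-point-freely on an abelian group is inversion. Alternatively, your own framework already contains everything needed: you showed $\overline{k}$ is a \emph{central} $2$-element of $\overline{G}=G/C_G(M)$, so $\Span{\overline{k}}\leq O_2(\overline{G})$ lies in every Sylow $2$-subgroup $T$ of $\overline{G}$ and indeed in $Z(T)$; but $T$ is a quotient of $SD_{2^n}$ (semidihedral, dihedral, Klein four, $C_2$, or trivial), and all of these have center of exponent two, so $\lambda^2=1$ and hence $\lambda=-1$. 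The identical argument applied to $G/MN$ --- where the image of $k$ is a central $2$-element, nontrivial because $k$ inverts $M\ne 1$ while $MN\leq C_G(M)$ --- gives the order-two half of the second bullet. This fact, that centers of quotients of a semidihedral group have order at most two, is where the semidihedral hypothesis actually enters: not through class-preservation on the mixed elements $mk'$, and not through the vacuous case $N=1$.
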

\begin{proof}
	
	By hypothesis, $M$ is a minimal normal subgroup contained in $O_{2'}(F)$.
	In particular, since $M$ is normal, it is a union of conjugacy classes,
	and so $\varphi(M) = M$; moreover, by Lemma~3.2, $O_{2'}(F)$ is a product
	of abelian minimal normal subgroups of $G$, and so, by
	\cite[Proposition~1]{her3}, $M$ has a complement $K$ such that
	$K\varphi = K$.
	
	Since $K \cong G/M$, the minimality of $G$ implies that $\varphi$
	induces an inner automorphism on $G/M$. Hence, there exists $h \in K$
	such that $\mathrm{conj}_{h}^{-1} \varphi $ induces the identity
	on $G/M$; that is, for every $l \in K$, we have
	$l(\mathrm{conj}_{h}^{-1}\varphi ) = lm$ for some $m \in M$.
	Since both $\varphi$ and $\mathrm{conj}_{h}$ preserve $K$
	(recall that $K\varphi = K$ and $h \in K$), the left-hand side
	belongs to $K$. As $G = M \rtimes K$ and $M \cap K = 1$, it follows
	that $m = 1$. Therefore, $\mathrm{conj}_{h}^{-1}  \varphi$ fixes
	$K$ element-wise. By Remark~1, it is then possible to modify $\varphi$
	by an inner automorphism so that $K$ is fixed element-wise by $\varphi$.
	
	Moreover, since $M$ is contained in a Sylow $p$-subgroup $Q$ of $G$
	and $\varphi$ is a Coleman automorphism, there exists $g \in G$ such that
	$\varphi|_Q = \mathrm{conj}_g|_Q$. Writing $g = mh$ with $m \in M$
	and $h \in K$, since $M$ is abelian and normal we obtain
	$\varphi|_M = \mathrm{conj}_h|_M$. Replacing $h$ by its $2$-part
	(which is possible since $\varphi$ has $2$-power order, and so
	$\mathrm{conj}_h|_M$ also has $2$-power order, implying that the
	odd-order part of $h$ acts trivially on $M$),
	we get a $2$-element $k \in K$ such that
	$\varphi|_M = \mathrm{conj}_k|_M$.
	Note that $k$ is determined by the Coleman property and does not depend
	on any choice of normal subgroup.
	
	Now, if $N \neq M$ is a normal subgroup of $G$, then by the minimality
	of $G$, $\varphi$ induces an inner automorphism on $G/N$; that is,
	$\varphi$ induces $\mathrm{conj}_{\bar{g}}$ on $G/N$ for some
	$\bar{g} \in G/N$. Since $\varphi$ has $2$-power order, also the
	induced automorphism has $2$-power order, so we may replace $\bar{g}$
	by its $2$-part and assume that $\bar{g}$ is a $2$-element of $G/N$.
	Now, $G/N = (MN/N) \rtimes (KN/N)$, and $MN/N$ has odd order (since
	$M$ has odd order). Therefore, all Sylow $2$-subgroups of $G/N$ are
	contained in conjugates of $KN/N$, and so every $2$-element of $G/N$
	is conjugate to an element of $KN/N$. Up to composing with an inner
	automorphism (which does not change the class modulo $\mathrm{Inn}(G/N)$),
	we may assume $\bar{g} = k_N N$ for some $2$-element $k_N \in K$.
	A~priori, $k_N$ depends on~$N$. However, since $M$ is a
	minimal normal subgroup and $N \neq M$, we have $N \cap M = 1$, and so
	$\mathrm{conj}_{k_N}|_M = \varphi|_M = \mathrm{conj}_k|_M$, which gives
	$k_N k^{-1} \in C_K(M)$. In particular, $k_N \equiv k \pmod{MN}$.
	Therefore, the image of $k$ in $G/MN$ is independent of the choice of~$N$,
	and $\varphi|_M = \mathrm{conj}_k|_M$.

	 Let $g = mh \in G$, with $m \in M$ and $h \in K$. Notice that, on the one hand,
	 \[
	 (gMN)\phi = (MgN)\phi = Mk^{-1} g k\, N = k^{-1} m h k\, MN,
	 \]
	 while, on the other hand,
	 \[
	 (gMN)\phi = (mh)\phi\, MN = k^{-1} m k\, h\, MN.
	 \]
	 Hence we obtain $h k\, MN = k h\, MN$.\\


	 Recalling that $MN = NM$, we have
	 \[
	 (MN k g)\phi = (NM k m h)\phi = NM k k^{-1} m k h = MN k h,
	 \]
	 and
	 \[
	 (MN g k)\phi = (NM m h k)\phi = (NM h k)\phi = MN h k.
	 \]
	 It follows that the image of $k$ is central in $G/MN$.\\
	 Moreover, note that $gMN = NMg = NMh = hMN$. We have shown that $[k, G] \leq MN$, and in particular the image
	 $\bar{k}$ of $k$ in $G/MN$ is central. We now show that $\bar{k}$
	 is an involution, i.e., $k^2 \in MN$.
	 Since $k \in S$ (up to conjugation, we may assume $k$ belongs to
	 a Sylow $2$-subgroup $S$ of $G$), and $MN \leq O_{2'}(F)$ has odd
	 order, we have $S \cap MN = 1$. Therefore, $S$ maps isomorphically
	 onto its image $\bar{S} = SMN/MN$ in $G/MN$, which is a Sylow
	 $2$-subgroup of $G/MN$. In particular, $\bar{S}$ is a semidihedral
	 group (isomorphic to $S$), and so $Z(\bar{S}) \cong C_2$.
	 Since $\bar{k} \in Z(\bar{S})$ (as $[k, G] \leq MN$ implies
	 $[\bar{k}, \bar{S}] = 1$), and $\bar{k}$ is a non-trivial $2$-element
	 (otherwise $k \in MN$, which is impossible since $k$ is a $2$-element
	 and $MN$ has odd order, so $k = 1$, contradicting the fact that $\varphi$
	 is non-inner), it follows that $\bar{k}$ generates $Z(\bar{S})$,
	 and in particular $\bar{k}^2 = 1$, that is, $k^2 \in MN$.
	 Therefore, the image of $k$ in $G/MN$ is a central involution. 
	 
	 
	Furthermore, \(C_M(k)=C_M(\phi)\unlhd G\), and so, since \(M\) is minimal normal subgroup of \(G\), we must have either \(C_M(\phi)=M\) or \(C_M(\phi)=1\). Since \(\phi\) does not act trivially on \(M\), otherwise it is the identity on both \(K\) and \(M\), i.e. on \(G\), we conclude that \(C_M(\phi)=1\). Finally, notice that \(\phi|_M=conj_k|_M\) is a fixed-point-free automorphism acting on th elementary abelian group \(M\), whose order is coprime to that of \(\phi\). Hence the action of  \(\phi|_M\) on the finite vector space $M$ is diagonalizable and, since it is fixed-point-free,  every eigenvalue needs to be -1. It follows that \(k\) acts by inversion on \(M\).
	\end{proof}
	
	\begin{lemma}\label{lem3}
		Let \(G\) be a minimal counterexample to Theorem~\ref{main}, and let \(M\) be a minimal normal subgroup of \(G\) contained in \(O_{2'}(F)\) with complement \(K\) in \(G\). Suppose that \(\phi\) fixes  \(K\) element-wise. Assume that \(M\) is not a cyclic group, and let \(U\) be a maximal subgroup of \(M\). Then \(\phi\) induces an inner automorphism on \(N_G(U)/U\).
	\end{lemma}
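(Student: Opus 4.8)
The plan is to produce a class-preserving automorphism on $N_G(U)/U$ via Proposition~\ref{prop:normalizer}, upgrade it to a \emph{Coleman} automorphism, and then invoke the minimality of $G$. First I would record the relevant structure. Since $M$ is a minimal normal subgroup of $G$ of odd order, it is solvable by the Feit--Thompson theorem~\cite{feitho}, hence abelian, and therefore an elementary abelian $p$-group for some odd prime $p$; as $M$ is not cyclic, every maximal subgroup $U$ satisfies $U\ne 1$ and has index $p$ in $M$. By hypothesis $\phi$ fixes the complement $K$ element-wise, and by Lemma~\ref{lem2bis} the automorphism $\phi$ acts on $M$ by inversion, i.e. $x\phi=x^{-1}=x^{p-1}$ for all $x\in M$. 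Writing $G=M\rtimes K$, these are exactly the hypotheses of Proposition~\ref{prop:normalizer} (with $M$ and $K$ playing the roles of the elementary abelian group and its complement), so $\phi$ induces a class-preserving automorphism $\bar\phi$ of $N_G(U)/U$.

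Next I would set up the reduction to minimality. Since $M$ is an abelian normal $p$-subgroup, $M\le N_G(U)$ and $N_G(U)=M\rtimes N_K(U)$, so $N_G(U)/U=(M/U)\rtimes N_K(U)$ with $M/U\cong C_p$. Because $U\ne 1$, the section $N_G(U)/U$ is a proper subquotient of $G$, and hence it satisfies Property~$\mathcal{W}$. Moreover the order of $\bar\phi$ divides that of $\phi$ and is thus a power of $2$. Therefore, once I show that $\bar\phi$ is a Coleman automorphism, Property~$\mathcal{W}$ will force $\bar\phi\in\Inn(N_G(U)/U)$, which is the desired conclusion.

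The main step is verifying that $\bar\phi$ is Coleman, and I would check one Sylow subgroup per prime $q$, treating $q\ne p$ and $q=p$ separately. For $q\ne p$ a Sylow $q$-subgroup of $N_G(U)$ lies in $N_K(U)\le K$, which $\phi$ fixes element-wise; hence $\bar\phi$ fixes a Sylow $q$-subgroup of $N_G(U)/U$ element-wise and is inner there. The delicate case is $q=p$, where I would exploit that $\phi$ is Coleman on all of $G$: for a Sylow $p$-subgroup $P$ of $N_G(U)$, extend $P$ to a Sylow $p$-subgroup $P_G$ of $G$ and choose $g\in G$ with $\phi|_{P_G}=\conj_g|_{P_G}$. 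Since $M\le O_p(G)\le P_G$ and $\phi$ inverts $M$, the element $g$ inverts $M$ and therefore normalizes $U$, so $g\in N_G(U)$; passing to $N_G(U)/U$ and using $U\le M\le P$ yields $\bar\phi|_{\bar P}=\conj_{gU}|_{\bar P}$, so $\bar\phi$ is Coleman at $p$ as well. (It suffices to verify the Coleman condition on one Sylow subgroup per prime, as it transfers to conjugates.) With $\bar\phi$ now a class-preserving Coleman automorphism of $2$-power order on the proper subquotient $N_G(U)/U$, Property~$\mathcal{W}$ gives $\bar\phi\in\Inn(N_G(U)/U)$. I expect the $q=p$ descent---guaranteeing that an element realizing $\phi$ on a Sylow $p$-subgroup of $G$ automatically normalizes $U$ and so descends to $N_G(U)/U$---to be the crux of the argument.
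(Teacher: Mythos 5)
Your proof is correct and follows essentially the same route as the paper's: both rest on the observation that an element $g$ realizing the Coleman automorphism $\phi$ on a Sylow $p$-subgroup containing $M$ must invert $M$ and hence normalize $U$, combined with Proposition~\ref{prop:normalizer} for the class-preserving part and the minimality of $G$ to conclude that the induced automorphism is inner. Your explicit prime-by-prime check that the induced automorphism on $N_G(U)/U$ is Coleman (including the primes $q\ne p$, where a Sylow $q$-subgroup can be taken inside $N_K(U)$ and is fixed element-wise) merely spells out what the paper asserts in one line.
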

	
	\begin{proof}
	
			Note that $M$ is a subgroup of a Sylow $p$-subgroup $Q$ of~$G$.
			Since $\varphi$ is a Coleman automorphism, we have
			$\varphi|_Q = \operatorname{conj}_g|_Q$ for some $g \in G$.
			As $M$ is normal in~$G$, it follows that for each $m \in M \leq Q$
			we have $m\varphi = g^{-1}mg \in M$.
			Moreover, as shown above, $g$ acts on~$M$ by inversion.
			Therefore, since inversion is an automorphism of the abelian group~$M$
			and $U$ is a subgroup of~$M$, we have $g^{-1}Ug = U$;
			that is, $g$ normalizes~$U$.
			
			We now show that $\varphi$ induces a Coleman automorphism $\psi$
			on $N_G(U)/U$.
			First, observe that $\varphi$ stabilizes both $N_G(U)$ and~$U$:
			indeed, $\varphi$ fixes $K$ element-wise and acts on~$M$ by inversion
			(which preserves every subgroup of~$M$, in particular~$U$
			and its normalizer in~$G$).
			Hence $\varphi$ induces a well-defined automorphism
			$\psi$ of $N_G(U)/U$.
			
			Let $q$ be a prime and let $\bar{T}$ be a Sylow $q$-subgroup
			of $N_G(U)/U$.
			
			\medskip\noindent\emph{Case $q = p$.}
			Write $\bar{T} = TU/U$, where $T$ is a Sylow $p$-subgroup
			of $N_G(U)$.
			Then $T$ is contained in a Sylow $p$-subgroup $Q'$ of~$G$,
			and $\varphi|_{Q'} = \operatorname{conj}_{g'}|_{Q'}$
			for some $g' \in G$ which, by the same argument as above,
			acts on~$M$ by inversion and hence normalizes~$U$.
			In particular, $g' \in N_G(U)$, and for every $t \in T$ we have
			\[
		(tU)\psi = t\varphi\,U = g'^{-1}tg'\,U
			=(tU)\operatorname{conj}_{g'U}.
			\]
			Therefore $\psi|_{\bar{T}}$ coincides with the inner automorphism
			of $N_G(U)/U$ given by conjugation by $g'U$.
			
			\medskip\noindent\emph{Case $q \neq p$.}
			Since $G = M \rtimes K$ and $q \neq p$, every Sylow $q$-subgroup
			of $N_G(U)$ is conjugate in $N_G(U)$ to a subgroup of
			$N_K(U) = K \cap N_G(U)$.
			Because $\varphi$ fixes $K$ element-wise, it fixes $N_K(U)$
			element-wise, and therefore $\psi$ acts as the identity on every
			Sylow $q$-subgroup of $N_G(U)/U$ that is the image of a subgroup
			of~$N_K(U)$.
			In particular, $\psi|_{\bar{T}}$ is inner (being the identity).
			
			\medskip
			Since $\psi$ restricts to an inner automorphism on every Sylow
			subgroup of $N_G(U)/U$, we conclude that $\psi$ is a Coleman
			automorphism of $N_G(U)/U$.
			
			By Proposition~2.3, the automorphism $\psi$ is also
			class-preserving, and hence, by the minimality of~$G$,
			it must be inner.
	\end{proof}
	
	\begin{lemma}\label{lem4}
		Let \(G\) be a minimal counterexample to Theorem~\ref{main}. Then any proper cyclic quotient of \(G\) has order 2.
	\end{lemma}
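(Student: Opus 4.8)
The plan is to combine two facts: cyclic quotients of the semidihedral Sylow subgroup $S$ are forced to be tiny, and Remark~\ref{Snonorm} rules out proper quotients of odd order. So let $G/N$ be a proper cyclic quotient, i.e.\ $N\neq 1$ and $G/N$ cyclic, and argue that $|G/N|=2$.

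First I would control the $2$-part of $G/N$. Since $S$ is a Sylow $2$-subgroup of $G$, its image $SN/N\cong S/(S\cap N)$ is a Sylow $2$-subgroup of $G/N$, and as $G/N$ is cyclic this image is cyclic. By the properties of semidihedral groups recalled above, $S/[S,S]\cong C_2\times C_2$, so every cyclic homomorphic image of $S$ factors through $C_2\times C_2$ and therefore has order at most $2$. Hence the $2$-part of $|G/N|$ is at most $2$.

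Next I would invoke Remark~\ref{Snonorm}: because $N\neq 1$, the quotient $G/N$ is a \emph{proper} quotient of $G$ and so has even order. Together with the previous step this forces its $2$-part to be exactly $2$, so that $G/N\cong C_2\times C_m$ with $m$ odd. To finish, suppose $m>1$ and aim for a contradiction. Let $L/N$ be the (unique, Sylow $2$-) subgroup of order $2$ of the abelian group $G/N$; its preimage $L$ is normal in $G$, satisfies $L\supseteq N\neq 1$, and $G/L\cong C_m$ is then a proper quotient of odd order $m>1$, contradicting Remark~\ref{Snonorm}. Therefore $m=1$ and $|G/N|=2$.

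The argument is elementary once the abelianization of $S$ is in hand, and it needs no input beyond Remark~\ref{Snonorm} and the isomorphism theorems. The only point requiring any care --- and the closest thing to an obstacle --- is the justification that cyclic quotients of $S$ have order at most $2$, which is immediate from $S/[S,S]\cong C_2\times C_2$; one should just make sure to phrase it as ``every cyclic quotient factors through the abelianization,'' rather than trying to analyze the three maximal subgroups directly.
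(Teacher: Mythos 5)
Your proof is correct, and its first half coincides with the paper's own argument: both show that the Sylow $2$-subgroup of the cyclic quotient $G/N$ is an abelian, hence cyclic, image of $S/[S,S]\cong C_2\times C_2$, so the $2$-part of $|G/N|$ is at most $2$, and both invoke Remark~\ref{Snonorm} to rule out odd order. Where you genuinely diverge is in eliminating the odd part $m$. The paper argues via automorphisms: if $|G/N|>2$, then $\phi$ induces a class-preserving Coleman automorphism of $2$-power order on the cyclic group $G/N$, and a contradiction is drawn from the external result \cite[Proposition~4.7]{her4}. You instead stay entirely inside elementary group theory: you pass to the preimage $L$ of the unique order-$2$ subgroup of $G/N$, note that $L\unlhd G$ is nontrivial and proper, and observe that $G/L\cong C_m$ would be a proper quotient of odd order $m>1$, contradicting Remark~\ref{Snonorm} a second time. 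Your route is more self-contained (no input beyond the remark and the isomorphism theorems) and it also sidesteps a delicate point in the paper's formulation --- a class-preserving automorphism of an abelian group is automatically trivial, hence inner, so the paper's claim that the induced automorphism on the cyclic quotient is ``clearly non-inner'' needs careful reading --- whereas the paper's version has the merit of fitting the inductive automorphism machinery (minimality plus Hertweck's results on quotients) used throughout the rest of the proof of Theorem~\ref{main}.
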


\begin{proof}
	Since $G/N$ is cyclic with Sylow $2$-subgroup isomorphic to $C_2$, 
	we have $|G/N| = 2m$ for some positive odd integer~$m$. 
	Suppose, by way of contradiction, that $|G/N| > 2$, i.e., $m > 1$. 
	Let $L$ be the unique normal subgroup of $G$ containing $N$ such that 
	$|G/L| = m$. Since $m > 1$, $L$ is a proper normal subgroup of~$G$. 
	Moreover, since $|G:L| = m$ is odd, $L$ contains a Sylow $2$-subgroup 
	of~$G$, and hence the Sylow $2$-subgroups of $L$ are semidihedral. 
	By the minimality of~$G$, $L$ satisfies Property~\ref{property}. 
	Since $2 \nmid |G/L|$, Lemma~2.2(ii) implies that $G$ also satisfies 
	Property~\ref{property}, which contradicts the assumption that $G$ is a 
	counterexample. Therefore, $m = 1$ and $|G/N| = 2$.
\end{proof}


\begin{lemma}\label{lem5}
	Let \(G\) be a minimal counterexample to Theorem~\ref{main}, and let \(E\) be the layer of \(G\). Then \(O_2(G)E\) is non-trivial.
\end{lemma}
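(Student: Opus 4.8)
The plan is to argue by contradiction, assuming $O_2(G)E=1$. Since $O_2(G)\le O_2(G)E$ and $E\le O_2(G)E$, this forces $O_2(G)=1$ and $E=1$. Consequently $F^*=F(G)E=F$, and the self-centralizing property of $F^*$ gives $C_G(F)=C_G(F^*)\le F$. By Lemma~\ref{lem2}, $F=O_{2'}(F)$ is a product of abelian minimal normal subgroups of $G$ (here I use $O_2(G)=1$, so $F$ has no $2$-part), hence $F$ is abelian and $C_G(F)=F$. Moreover $F$ has odd order, since its $2$-part is $O_2(F)=O_2(G)=1$, and $F\ne 1$ by Lemma~\ref{lemO2'}. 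So I am reduced to a group with an odd, abelian, self-centralizing Fitting subgroup $1\ne F\unlhd G$.

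Next I would invoke Lemma~\ref{lem:frattini}(iii) with $p=2$ and $N=F$. The hypotheses of Lemma~\ref{lem:frattini} hold for our $\phi$ exactly as observed in the proof of Lemma~\ref{lem2} (it is a non-inner, class-preserving automorphism of $2$-power order which, by minimality of $G$, induces inner automorphisms on all proper quotients). Since $O_2(G)=1$ and $1\ne F\unlhd G$ with $C_G(F)\le F$, part (iii) yields that $\phi|_F\ne \gamma|_F$ for every $\gamma\in\Inn(G)$. Thus, to reach a contradiction, it suffices to exhibit an inner automorphism that agrees with $\phi$ on $F$.

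To produce such an inner automorphism I would use Lemma~\ref{lem2bis}. Pick a minimal normal subgroup $M\le F\le O_{2'}(G)$, take the complement $K$ guaranteed by Lemma~\ref{lem2}, and modify $\phi$ (Remark~\ref{rem:modify}) so that $\phi|_K=\id$ and $\phi|_M=\conj_k|_M$ for a $2$-element $k\in K$ inverting $M$. Consider first the clean case where $F$ is itself minimal normal, i.e.\ $F=M$. Then $C_G(M)=M$, so $K\cong G/M$ embeds into $\Aut(M)\cong\GL(d,p)$ with $k$ mapping to the inversion map, i.e.\ to the central element $-1$. Being central in $\Aut(M)$, its image is central in the image of $K$, whence $k\in Z(K)$. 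Therefore $\conj_k$ fixes $K$ pointwise and inverts $M$, so $\conj_k$ and $\phi$ agree on both $K$ and $M$, hence on $G=M\rtimes K$; that is, $\phi=\conj_k\in\Inn(G)$, contradicting Lemma~\ref{lem:frattini}(iii) (and the non-innerness of $\phi$).

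The main obstacle is the general case, where $F=M_1\times\cdots\times M_r$ has several minimal normal factors. Here the inversion-type action of $\phi$ on $F$ must be realized by a \emph{single} element of $G$ inverting all of $F$ simultaneously: a priori the class-preserving hypothesis only supplies, for each $f\in F$, some element conjugating $f$ to $\phi(f)$, and the Coleman hypothesis only supplies, for each odd prime $p$, an element $g_p$ inducing $\phi$ on the Sylow part $O_p(G)=P\cap F$, with no control across distinct primes. To overcome this I would exploit the extra information in Lemma~\ref{lem2bis}, namely that the image of $k$ in $G/MN$ is central, together with the fact that $C_M(K)=1$ (minimality of $M$ and $k$ inverting $M$) forcing $C_G(K)=Z(K)$; the goal is to adjust $k$ modulo the complementary factors so that it lands in $Z(K)$, or else to peel off the factors one at a time by applying Lemma~\ref{lem3} and Proposition~\ref{prop:normalizer} to maximal subgroups $U$ of the non-cyclic factors and the class-preserving automorphisms they induce on $N_G(U)/U$, thereby reducing to the minimal-normal case handled above. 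In either route one obtains an inner automorphism agreeing with $\phi$ on $F$, contradicting the second paragraph and completing the proof.
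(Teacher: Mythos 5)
Your reduction and your treatment of the case where $F$ is itself a minimal normal subgroup are correct, and they match the paper's opening moves: assuming $O_2(G)E=1$, one gets that $F^*=F$ is an odd-order product of abelian minimal normal subgroups with $C_G(F)\le F$, and the target contradiction is with Lemma~\ref{lem:frattini}(iii) (in the paper this is exactly how the case $\ell=1$, i.e.\ $F=M_1$, is dismissed: Lemma~\ref{lem2bis} already provides an inner automorphism agreeing with $\phi$ on $M_1$, so your excursion through $Z(K)$ and $\GL(d,p)$, while valid, is not even needed there).

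The genuine gap is the case $F=M_1\times\cdots\times M_\ell$ with $\ell>1$, which is the actual content of the lemma, and which you leave as an unexecuted plan. Neither of your two proposed routes works as stated. For the first route: when $\ell>1$ the action of $K$ on a single factor $M_i$ is no longer faithful, so the observation that inversion is central in $\Aut(M_i)$ only yields $[k_i,K]\le C_K(M_i)$, not $k_i\in Z(K)$; there is no soft reason why the separate elements $k_i$ (one per factor) can be replaced by a single element of $G$ inverting all of $F$, and indeed this patching is precisely what requires work. For the second route: applying Lemma~\ref{lem3} to a maximal subgroup $U$ of a non-cyclic factor gives an inner automorphism of $N_G(U)/U$, but transporting that information back to $G$ is not a formal reduction to the minimal-normal case. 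The paper resolves the multi-factor case by arguments that rely essentially on the semidihedral hypothesis and on the already-known solvable case: (1) it first shows all the inverting $2$-elements $h_i$ coincide, because if $h_i\ne h_j$ then $S=\Span{yh_j,h_i}$, forcing $FS\unlhd G$ and hence $G=FS$ by Remark~\ref{Snonorm}, contradicting the solvable-group result \cite[Theorem~3.5]{colsemidih}; (2) cyclic factors $M_i$ are then excluded via the decomposition $H=C_H(M_i)\times\Span{h_i}$, the fact that $4\nmid |C_H(M_i)|$ (from the structure of $SD_{2^n}$), and \cite[Proposition~4.6]{herkim}; (3) non-cyclic factors are excluded by combining Lemma~\ref{lem3} with explicit element computations inside $S$ (identifying $h_1=a^{2^{n-2}}$, fixed-point-free actions of $\Span{a}$, writing $h=a^ty$, $k=a^sy$), in both cases producing a single element conjugation by which agrees with $\phi$ on all of $F$, contradicting Lemma~\ref{lem:frattini}(iii). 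Your sketch engages with none of this machinery, and since your argument up to that point uses the semidihedral hypothesis only through quoted lemmas, there is no reason to expect the remaining step to follow by soft arguments alone.
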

\begin{proof}
Suppose, by way of contradiction, that \(O_2(G)E = 1\). This implies that \(F^* = F\) is a \(2'\)-group and, by Lemma~\ref{lem2}, it is a direct product of abelian minimal normal subgroups \(M_i\) of \(G\), with \(1 \leq i \leq \ell\). Note that \(\ell > 1\). Indeed, if \(F = M_1\), since \(C_G(F) = C_G(F^*) \leq F^* = F\), by Lemma~\ref{lem:frattini}(iii), it follows that \(\phi|_{M_1} \ne \gamma|_{M_1}\) for each \(\gamma\in\Inn(G)\), which contradicts Lemma~\ref{lem2bis}. Throughout the remainder of the proof, I will repeatedly make use of the fact -- without further explicit mention -- that the number~$\ell$ of the $M_i$ is strictly greater than zero.

Since $O_2(G)E = 1$, we have $F = O_{2'}(F)$, which is a $2'$-group. 
By Lemma~\ref{lem2}, $F$ is a direct product of abelian minimal normal subgroups 
$M_1, \ldots, M_\ell$ of~$G$, and $F \cap \Phi(G) = 1$, so that $F$ has a 
complement~$H$ in~$G$, with $G = F \rtimes H$. Since $F \ne 1$ (by 
Lemma~\ref{lemO2'}), $G/F \cong H$ is a proper quotient of~$G$, and so, by the 
minimality of~$G$, $\varphi$ induces an inner automorphism on~$G/F$. By 
Remark~1, we can modify $\varphi$ by an inner automorphism so that it 
induces the identity on~$G/F$; equivalently, $\varphi$ fixes $H$ 
element-wise.

For each index~$i$, we now apply Lemma~\ref{lem2bis} to the minimal normal 
subgroup~$M_i$, with complement $K_i = \hat{M}_i \rtimes H$ and taking $N$ 
to be any normal subgroup of~$G$ different from~$M_i$ (for instance, 
$\hat{M}_i$). Since $\varphi$ already fixes $H$ element-wise and 
$H \leq K_i$, the $2$-element $h_i$ provided by Lemma~\ref{lem2bis} can be chosen 
in~$H$. In particular, $h_i$ inverts the elements of~$M_i$, its image 
in~$G/F \cong H$ is a central involution, and 
$\varphi|_{M_i} = \operatorname{conj}_{h_i}|_{M_i}$.
 
By Lemma~\ref{lem2bis}, applied with $M = M_i$ and $N = \hat{M}_i$ (so that 
$MN = F$), the involution $h_i$ maps to a central involution 
in $G/F \cong H$. Since $G = F \rtimes H$ and $F$ has odd order, 
we may choose the Sylow $2$-subgroup~$S$ to lie in~$H$. 
The projection $H \to G/F$ restricts to an isomorphism on~$H$, 
so $h_i$ is itself a central involution in~$H$. In particular, 
$h_i \in Z(H) \leq Z(S)$. Since $Z(S)$ has order~$2$ and contains 
a unique involution, namely $a^{2^{n-2}}$, it follows that 
$h_i = a^{2^{n-2}}$ for every~$i$. In particular, $h_i = h_j$ 
for all $1 \leq i, j \leq \ell$.


Now we fix an index \(i\), and assume that \(M_i\) is cyclic, generated by \(m_i\). Let \(K = C_H(M_i)\). Note that \(H \leq N_G(M_i)\) implies \(C_H(M_i) \unlhd H\). Moreover, since \(M_i\) is a cyclic minimal normal subgroup of odd order, it follows that \(M_i \cong C_p\) for some prime \(p\). Hence, \(H/K\) is isomorphic to a subgroup of \(C_{p-1}\), and so it is cyclic. Furthermore, \(h_i\) does not belong to \(K\), since it acts by inversion on \(M_i\), and so, by Lemma~\ref{lem4}, \(H/K \cong C_2\), generated by \(h_i\in H\). 
Thus, we conclude that \(H = K \rtimes \langle h_i \rangle\). Since $h_i$ maps to a central involution in $H/K$, we have 
$H = K \times \langle h_i \rangle$. Recall that, since $F$ has odd 
order, a Sylow $2$-subgroup~$S$ of~$G$ is contained in~$H$ (up to 
conjugacy). The projection $H \to \langle h_i \rangle \cong C_2$ 
restricts to a homomorphism $S \to C_2$, whose kernel $S \cap K$ 
has index at most~$2$ in~$S$. Since $S \cap K$ is a subgroup of~$S$ 
of index at most~$2$, it must contain one of the three maximal 
subgroups of~$S$ (the cyclic group~$C$, the dihedral group~$D$, 
or the generalized quaternion group~$Q$). In all cases, 
$h_i = a^{2^{n-2}} \in [S,S] \leq S \cap K \leq K$. 
But $h_i \in \langle h_i \rangle$ and $K \cap \langle h_i \rangle = 1$, 
which forces $h_i = 1$, a contradiction. Therefore, $M_i$ cannot 
be cyclic.


Now we modify \(\phi\) so that it fixes \(\hat{M}_1H\) element-wise and \(\phi|_{M_1} = \conj_{h_1}|_{M_1}\), and we choose a maximal subgroup \(U\) of \(M_1\). By Lemma~\ref{lem3}, \(\phi\) induces an inner automorphism on \(N_G(U)/U\). Then there exists a non-trivial 2-element \(h \in N_H(U)\) such that \([h, \hat{M}_1] = 1\) and \(h\) acts on \(M_1/U\) by inversion. Substituting \(S\) with a suitable conjugate, if necessary, we may suppose \(h \in S\). The same reasoning applied to another minimal normal subgroup, for instance \(M_2\), shows that there exists a non-trivial 2-element \(k \in H\) such that \([k, \hat{M}_{2}] = 1\),  so in particular \([k, M_1] = 1\), and we may suppose that \(k \in S\). Indeed, $k$ is a $2$-element, so it is contained in some Sylow 
$2$-subgroup~$S'$ of~$G$. Since all Sylow $2$-subgroups are conjugate, 
there exists $g \in G$ such that $S' = g^{-1}Sg$. Setting $k' = gkg^{-1} \in S$, 
we have $[k', M_1] = g[k, M_1]g^{-1} = 1$, since $[k, M_1] = 1$ and $M_1$ 
is normal in~$G$. Therefore, we may assume $k \in S$. As seen above, the element \(h_1\) in \(H\), acting on \(M_1\) by inversion, is a central involution in \(S\), and we may assume that it belongs to the maximal cyclic subgroup \(C = \langle a \rangle\) of \(S=SD_{2^{n}}\). Since \(h_1\) acts on \(M_1\) by inversion and \(M_1\) has no elements of order 2, it follows that \(h_1\) acts fixed-point free on \(M_1\). In particular, \(h_1 = a^{2^{n-2}}\). Moreover, \(a\) also acts fixed-point free on \(M_1\) and \([k, M_1] = 1\), and so we may suppose that \(k \not\in \Span{a}\), that is, \(k=a^sb\), for some \(s\in\mathbb{N}\) and \(b\) as  introduced in Notation~\ref{notation}. Also, \(h = a^t b\), for some \(t \in \mathbb{N}\), since \([h, \hat{M}_1] = 1\) and \(h_1\) -- which equals \(h_i\) for all \(i\) -- acts fixed-point free on \(\hat{M}_1\). Then \(h k^{-1} = a^{t - s}\). Since \(h = a^{t - s} k\), and \(k\) fixes \(M_1\) element-wise, it follows that \(h\) acts on \(M_1\) by conjugation with \(a^{t - s}\). Since \(h\) acts on \(M_1 / U\) by inversion, it follows that \(a^{t - s} = h_1\). Therefore, \(h = a^{t - s} k = h_1 k\), and this implies that \(\phi|_F = \conj_h|_F\), which contradicts Lemma~\ref{lem:frattini}(iii).
\end{proof}

\begin{lemma}\label{lem6}
Let \(G\) be an  minimal counterexample to Theorem~\ref{main}. Then \(O_2(G)\) is non-trivial.
\end{lemma}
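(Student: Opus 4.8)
The plan is to argue by contradiction, assuming $O_2(G)=1$ and deriving that $\phi$ is inner. Since $O_2(G)=O_2(F)$, this hypothesis forces the Fitting subgroup $F$ to have odd order, and Lemma~\ref{lemO2'} then gives $F=O_{2'}(F)\neq 1$. At the same time, Lemma~\ref{lem5} yields $O_2(G)E\neq 1$, so the layer satisfies $E\neq 1$. I would record the standard facts that $F^*=FE$ is self-centralizing, that $[F,E]=1$, and that $Z(E)\le F$ has odd order; consequently a Sylow $2$-subgroup $P$ of $F^*$ coincides with a Sylow $2$-subgroup of $E$ and embeds in the semidihedral group $S$.

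The first substantive step is to classify the components of $E$, exactly as in the proof of Lemma~\ref{lemO2'}. Each component $L$ is quasisimple with $L/Z(L)$ simple, and since $Z(L)\le Z(E)$ has odd order, a Sylow $2$-subgroup of $L/Z(L)$ is isomorphic to one of $L$ and hence to a subgroup of $S$, so it is cyclic, dihedral, generalized quaternion, or Klein four. Burnside's normal $p$-complement theorem \cite[Theorem II, Section 243]{burnside} together with Feit--Thompson \cite{feitho} rules out the cyclic case; Brauer--Suzuki \cite{brauer,brauersuzuki,suzuki} rules out generalized quaternion; and a semidihedral Sylow $2$-subgroup is impossible because $E\unlhd G$ would then contain $S$, against Remark~\ref{Snonorm}. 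Hence every $L/Z(L)$ has dihedral or Klein four Sylow $2$-subgroup, so by Gorenstein--Walter \cite{gorwal} each is isomorphic to $\mathrm{PSL}_2(q)$ or $\mathrm{A}_7$. As in Lemma~\ref{lemO2'}, I would then invoke \cite[Theorem~14]{herkim}: up to an inner automorphism every Coleman automorphism is $p$-central for each prime $p$, and for such a product there is a prime $p\in\pi(E)$ all of whose $p$-central automorphisms are inner. This allows me to modify $\phi$ so that it fixes $E$ element-wise.

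It remains to produce a Sylow $2$-subgroup fixed element-wise and apply Lemma~\ref{fixS}. Since $\phi$ is Coleman we may write $\phi|_S=\conj_s|_S$ with $s\in S$; because $\phi$ fixes $P\le S$ element-wise, $s$ centralizes $P$, and using the rigid subgroup structure of the semidihedral $S$ together with $C_G(F^*)\le F^*$, one aims to pin $s$ into $Z(S)$, whence $\phi|_S=\id$ and Lemma~\ref{fixS} forces $\phi\in\Inn(G)$, the desired contradiction. I expect this last paragraph to be the main obstacle. Controlling $C_S(P)$ inside $S$ is delicate, especially in the Klein four case (where $E/Z(E)\cong\mathrm{PSL}_2(q)$ with $q\equiv\pm 3\pmod 8$ and $C_S(P)$ need not lie in $Z(S)$); and one must simultaneously arrange that $\phi$ fixes the odd part of $F$, not merely $Z(E)$, before the self-centralizing property of $F^*$ and the Coleman condition can be combined to trap $s$ in the center of $S$.
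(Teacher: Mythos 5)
Your opening matches the paper's setup: assume \(O_2(G)=1\), deduce \(E\ne 1\) from Lemma~\ref{lem5}, classify the components of \(E\) via Burnside/Feit--Thompson, Brauer--Suzuki, Remark~\ref{Snonorm} and Gorenstein--Walter, and invoke \cite[Theorem~14]{herkim} to fix \(E\) element-wise. (The paper gets the structure of \(E\) more cleanly: \(Z(E)=\Phi(E)\le\Phi(G)\) is a \(2\)-group by Lemma~\ref{lem2} while having odd order, so \(Z(E)=1\) and \(E\) is simple; your quasisimple-component workaround is harmless.) The problem is your endgame, and it is not merely ``delicate'': it is provably impossible. Under the contradiction hypothesis \(O_2(G)=1\), Lemma~\ref{lem:frattini}(iii) applies with \(N=F^*\) (which is self-centralizing) and says that \(\phi|_{F^*}\ne\gamma|_{F^*}\) for \emph{every} \(\gamma\in\Inn(G)\). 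Hence no modification by inner automorphisms can make \(\phi\) fix \(F^*=FE\) element-wise, yet that is exactly what you say one must arrange (``\(\phi\) fixes the odd part of \(F\), not merely \(Z(E)\)''). Nor can you bypass \(F\): by Lemma~\ref{lemO2'} we have \(F=O_{2'}(F)\ne 1\), so \(F\le C_G(E)\) and \(C_G(E)\not\le E\), and fixing \(E\) alone gives no control of \(\phi\) on \(G/E\), so Lemma~\ref{fixS} has nothing to bite on. The template of Lemma~\ref{lemO2'} worked only because there \(F=1\) and \(F^*=E\) was simultaneously self-centralizing and fixable; here the nontrivial odd Fitting subgroup blocks that route, and indeed \(\phi\) must act nontrivially on it (by Lemma~\ref{lem2bis} it inverts a minimal normal \(M\le O_{2'}(F)\)).

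The paper's proof goes around this obstruction rather than through it. It takes a minimal normal subgroup \(M\le O_{2'}(F)\) with complement \(K\) (Lemma~\ref{lem2}); since \(M\cap E=1\) one gets \(E\le K\), and by Lemma~\ref{lem2bis} one may fix \(K\) (hence \(E\)) element-wise while \(\phi\) inverts \(M\). First \(M\) is shown to be cyclic: otherwise Lemma~\ref{lem3} yields a \(2\)-element \(g\) with \([E,g]=1\) acting by inversion on \(M/U\), and then \(\Span{P,g}\), with \(P\) a Sylow \(2\)-subgroup of \(E\), violates the subgroup structure of a semidihedral group; this is where a computation of the kind you sketch with \(C_S(P)\) actually lives, and it produces a contradiction rather than a fixed Sylow subgroup. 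Then \(K/C_K(M)\cong C_2\) by Lemma~\ref{lem4} and \(K/E=\Span{\bar k}\times\bigl(C_K(M)/E\bigr)\) by Lemma~\ref{lem2bis}; the case \(C_K(M)=E\) is excluded because it would give a \(2\)-element outside \(E\) centralizing a Sylow \(2\)-subgroup of \(E\), incompatible with \(S\) semidihedral, and the case \(C_K(M)\ne E\) forces a Sylow \(2\)-subgroup of \(G/E\) to be \(S/[S,S]\cong C_2\times C_2\), making a Sylow \(2\)-subgroup of \(E\) the cyclic group \([S,S]\), which is impossible. Your proposal would need its final paragraph replaced by an argument of this kind; as written it cannot be completed.
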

\begin{proof}
	Suppose, by way of contradiction, that $O_2(G) = 1$. This implies 
	that the order of $F$ is odd, and by Lemma~\ref{lem5}, $E \neq 1$. 
	Moreover, $Z(E)$ also has odd order. However, since $Z(E) = \Phi(E) 
	\leq \Phi(G)$, and $\Phi(G)$ is a $2$-group, it follows that $Z(E) = 1$. 
	Since $Z(E) = 1$, the layer $E$ is a direct product of non-abelian 
	simple groups $L_1, \ldots, L_r$. Suppose $r \geq 2$. Then a Sylow 
	$2$-subgroup of~$E$ is of the form $T_1 \times \cdots \times T_r$, 
	where each~$T_i$ is a Sylow $2$-subgroup of~$L_i$. By Burnside's 
	normal $p$-complement theorem, a non-abelian simple group cannot have 
	a cyclic Sylow $2$-subgroup, and hence $|T_i| \geq 4$ for every~$i$. 
	In particular, $T_1 \times T_2$ is an abelian subgroup of~$S$ of 
	order at least~$16$ and rank at least~$2$. However, the only abelian 
	subgroup of~$S = SD_{2^n}$ of rank~$2$ is the Klein four-group, 
	which has order~$4$. This is a contradiction. Therefore $r = 1$, 
	and $E$ is a non-abelian simple group.  By Lemma~\ref{lem2}, there exists an abelian minimal normal subgroup of $O_{2'}(F)$ which has a complement $K$ in $G$. We claim that $E \leq K$. Since $M$ and $E$ are both normal in~$G$ 
	and $M \cap E \leq F \cap E = Z(E) = 1$, it follows that $[M, E] = 1$. 
	Now, consider the projection $\pi \colon G = M \rtimes K \to K$ 
	(with kernel~$M$). Since $E \cap M = 1$, the restriction $\pi|_E$ 
	is injective. Let us define $\sigma \colon E \to M$ such that \(e\sigma=e \cdot (e\pi)^{-1} \in M\) for each \(e\in E\). Notice that every $e \in E$ can be written as  $e = e\sigma \cdot e\pi$. A direct computation shows 
	that the map $\sigma \colon E \to M$ is a crossed homomorphism with 
	respect to the conjugation action of~$E\pi$ on~$M$. Since 
	$[E, M] = 1$, the subgroup $E\pi$ centralizes~$M$, and so $\sigma$ 
	is in fact a group homomorphism. As $E$ is perfect (being the layer 
	of~$G$) and $M$ is abelian, we have $E\sigma \leq [M, M] = 1$. 
	Therefore $\sigma = 1$, which means $e = e\pi \in K$ for all $e \in E$, 
	and hence $E \leq K$. Moreover, we can suppose that $\varphi$ fixes $K$ 
	element-wise and acts on $M$ by inversion.

	 Suppose that \(M\) is not cyclic. Let \(U\) be a maximal  subgroup of \(M\). By Lemma~\ref{lem3}, \(\phi\) induces an inner automorphism on \(N_G(U)/U\). Hence, there exists a 2-element \(g \in K\) such that \((h\phi)h^{-g} \in U\) for each \(h \in N_G(U)\). In particular, since \(E \leq K\) and \(\phi\) fixes \(K\) element-wise, we have \([E, g] = 1\), and \(g\) acts on \(M/U\) by inversion.
	Let \(P\) be a Sylow 2-subgroup of \(E\). As shown in the proof of Lemma~\ref{lemO2'}, \(P\) is not cyclic, \([P, g] = 1\), and \(\langle P, g \rangle\) is a 2-group with center of order greater than or equal to 4, which is impossible. Therefore, \(M\) is cyclic, and, by Lemma~\ref{lem4}, \(K/C_K(M)\cong C_2\).
	
Since $K/C_K(M) \cong C_2$ (as shown above by Lemma~\ref{lem4}), and 
$\bar{k} \notin C_K(M)/E$ (as $k$ acts on $M$ by inversion), 
we have $K/E = \langle \bar{k} \rangle \cdot (C_K(M)/E)$ with 
$\langle \bar{k} \rangle \cap (C_K(M)/E) = 1$. Moreover, 
by Lemma~\ref{lem2bis}, $\bar{k}$ is a central involution in $G/ME \cong K/E$, 
and so the product is direct: 
$K/E = \langle \bar{k} \rangle \times (C_K(M)/E)$.
	
	Assume that \(C_K(M) = E\), that is, \(G/ME = \Span{\bar{k}}\cong C_2\). Let \(1 \ne m \in M\) and \(e \in E\). Then there exists \(c \in K\) such that \((me)\phi = (me)^c\), that is, \(m^{-1} = m\phi = c^{-1}mc\) and \(e = e\phi = c^{-1}ec\). It follows that \(c = hk\), for some \(h \in E\), and so \(\phi|_E = \conj_k|_E\) is a class-preserving automorphism of \(E\). Hence, \(\conj_k|_E \in \Inn(E)\), considering the structure of \(E\) described in the proof of Lemma~\ref{lemO2'} and recalling that a class-preserving automorphism, up to an inner automorphism, is \(p\)-central for every prime \(p\) dividing the order of \(E\). In other words, there exists a 2-element \(e \in E\) such that \(\conj_k|_E = \conj_e|_E\), and, in particular, \([k, e] = 1\). This implies that \(ke^{-1}\) is a 2-element not belonging to \(E\), and it centralizes a Sylow 2-subgroup of \(E\). Note that this leads to a contradiction: indeed, \(E\) must be either a dihedral or a generalized quaternion group, and the subgroup generated by \(ke^{-1}\) centralizes a Sylow 2-subgroup of \(E\) while having trivial intersection with \(E\). However, this is not possible, since the Sylow 2-subgroups of \(G\) are semidihedral. Therefore, we conclude that \(C_K(M)/E \ne 1\).
	
	Let \(P\) be a Sylow 2-subgroup of \(C_K(M)/E\).  A Sylow $2$-subgroup of $K/E$ has the form 
	$\langle \bar{k} \rangle \times P \cong C_2 \times P$, 
	and is a quotient of the semidihedral group~$S$ 
	(since $F$ has odd order, a Sylow $2$-subgroup of~$G$ maps onto 
	a Sylow $2$-subgroup of $G/E \cong M \rtimes (K/E)$, and the 
	$2$-part concentrates in $K/E$). The only quotient of $S = SD_{2^n}$ 
	that admits a direct factor isomorphic to~$C_2$ is 
	$S/[S,S] \cong C_2 \times C_2$. Indeed, any proper quotient of~$S$ 
	is either cyclic or dihedral or isomorphic to $C_2 \times C_2$, 
	and among these, only $C_2 \times C_2$ contains a direct factor~$C_2$ 
	as a proper subgroup. It follows that the Sylow $2$-subgroup 
	of $K/E$ is $C_2 \times C_2$, and in particular $P \cong C_2$.
Hence, a Sylow 2-subgroup of \(G/E = MK/E\) is an elementary abelian 2-group of order 4, and so it is \(S/[S,S]\). It follows that a Sylow 2-subgroup of \(E\) is \([S,S]\), which is cyclic. However, \(E\) cannot contain a cyclic Sylow 2-subgroup, and so this leads to a contradiction. Therefore, we conclude that \(O_2(G) \ne 1\).
\end{proof}

\begin{lemma}\label{lem7}
	Let \(G\) be a minimal counterexample to Theorem~\ref{main}, and let \(S=SD_{2^{n}}\) be a semidihedral Sylow \(2\)-subgroup of \(G\). Then \(Z(S)\) is normal in \(G\). In particular, \(Z(S)=O_2(G)\)
\end{lemma}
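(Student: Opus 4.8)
The plan is to determine \(O_2(G)\) completely. First I would record that \(O_2(G)\neq 1\) by Lemmas~\ref{lem5} and~\ref{lem6}. Since \(O_2(G)\unlhd G\), it is a non-trivial normal subgroup of the \(2\)-group \(S\), so it meets the centre: \(O_2(G)\cap Z(S)\neq 1\), and because \(\Size{Z(S)}=2\) this already forces \(Z(S)\le O_2(G)\). Hence the entire content of the lemma is the reverse inclusion, i.e. the assertion that \(\Size{O_2(G)}=2\); once this is known, \(Z(S)\le O_2(G)\) gives \(O_2(G)=Z(S)\), which is then normal in \(G\) because \(O_2(G)\) is. Note moreover that \(O_2(G)\) is \emph{proper} in \(S\): otherwise \(S=O_2(G)\unlhd G\) would contradict Remark~\ref{Snonorm}. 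Thus, by property~(iv) of semidihedral groups, \(O_2(G)\) is one of the maximal subgroups \(C,D,Q\), or a cyclic subgroup \(\Span{a^{2^i}}\le[S,S]\) with \(1\le i\le n-2\); the desired case is exactly \(i=n-2\), i.e. \(O_2(G)=Z(S)\).

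The engine of the argument is that a minimal counterexample is \emph{never solvable}: by \cite[Theorem~3.5]{colsemidih} a solvable group with semidihedral Sylow \(2\)-subgroup satisfies Property~\eqref{property} and so admits no automorphism like \(\phi\). I would therefore assume for contradiction that \(\Size{O_2(G)}\ge 4\) and deduce that \(G\) is solvable. If \(O_2(G)\in\{C,D,Q\}\), then \(S/O_2(G)\cong C_2\), so \(\overline G=G/O_2(G)\) has a Sylow \(2\)-subgroup of order \(2\); since \(\Aut(C_2)=1\), Burnside's normal \(p\)-complement theorem yields a normal \(2\)-complement \(\overline T\unlhd\overline G\) of odd order, which is solvable by the Feit--Thompson theorem, and \(\overline G/\overline T\cong C_2\). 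Hence \(\overline G\), and therefore \(G\), is solvable, a contradiction. I emphasize that this case disposes uniformly of the generalized quaternion subgroup \(Q\) (in particular of \(Q\cong Q_8\) when \(n=4\)).

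In the remaining case \(O_2(G)=\Span{a^{2^i}}\) is cyclic of order \(\ge 4\), i.e. \(1\le i\le n-3\). Here I would pass to \(W=\Span{a^{2^{n-3}}}\cong C_4\), the unique subgroup of order \(4\) of the cyclic group \(O_2(G)\); being characteristic in \(O_2(G)\unlhd G\), it is normal in \(G\). A one-line computation with the relation \(bab=a^{2^{n-2}-1}\) shows that \(b\) inverts \(a^{2^{n-3}}\), whence \(C_S(W)=C\). As \(\Aut(W)\cong C_2\) and \(b\notin C_G(W)\), the subgroup \(C_G(W)\) has index \(2\) in \(G\) with \(C_G(W)\cap S=C\) cyclic. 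Since \(2\) is the least prime dividing \(\Size{C_G(W)}\) and its Sylow \(2\)-subgroup is cyclic, \(C_G(W)\) has a normal \(2\)-complement of odd (hence solvable) order, so \(C_G(W)\) is solvable; as \([G:C_G(W)]=2\), so is \(G\). This final contradiction forces \(\Size{O_2(G)}=2\), completing the proof.

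The step I expect to be the genuine obstacle is precisely the choice of the normal \(C_4\): the candidate \(W\) is \emph{not} characteristic in \(O_2(G)\) when \(O_2(G)\cong Q_8\), because \(Q_8\) has three interchangeable cyclic subgroups of order \(4\). Routing the three maximal-subgroup cases through the cyclic quotient \(S/O_2(G)\cong C_2\), rather than through \(W\), is exactly what makes the case analysis uniform and avoids this trap. Everything else rests on two standard inputs already invoked in the paper, namely the solvable semidihedral result \cite[Theorem~3.5]{colsemidih} and the existence of normal \(2\)-complements for cyclic (indeed order-\(2\)) Sylow \(2\)-subgroups.
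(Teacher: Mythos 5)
Your proof is correct, and it takes a genuinely different route from the paper's. The paper argues in two steps. First, it shows \(Z(S)\unlhd G\) by the same case analysis over the possible values of \(O_2(G)\) (one of \(C\), \(D\), \(Q\), or a cyclic subgroup of \(C\)), but in each case it exhibits \(Z(S)\) as a \emph{characteristic} subgroup of \(O_2(G)\): \(Z(S)=Z(D)=Z(Q)\), and \(Z(S)\) is the unique involution of any non-trivial cyclic normal \(2\)-subgroup. Second, to get \(O_2(G)=Z(S)\), it invokes the automorphism machinery already set up: with \(M\le O_{2'}(F)\) minimal normal, \(K\) a complement, and \(\phi\) modified as in Lemma~\ref{lem2bis}, it takes \(z\in O_2(G)\setminus Z(S)\) and \(1\ne m\in M\), writes \((mz)\phi=(mz)^g\) for a \(2\)-element \(g\in K\), deduces \(m^g=m^{-1}\) and \(z^g=z\), and concludes that \(\Span{O_2(G),g}\) would be a non-cyclic abelian \(2\)-group of order at least \(8\), impossible inside a semidihedral Sylow \(2\)-subgroup. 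You instead obtain \(Z(S)\le O_2(G)\) for free (a non-trivial normal subgroup of a \(2\)-group meets its center) and pin down \(\Size{O_2(G)}=2\) purely by solvability: if \(O_2(G)\) were larger, then either \(G/O_2(G)\) has Sylow \(2\)-subgroups of order \(2\), or \(C_G(W)\) (with \(W\cong C_4\) normal in \(G\)) has cyclic Sylow \(2\)-subgroups, and in both cases Burnside's normal complement theorem plus Feit--Thompson force \(G\) to be solvable, contradicting \cite[Theorem~3.5]{colsemidih}. Your argument never touches \(\phi\), \(M\), or \(K\), so it is independent of the setup of Lemmas~\ref{lemO2'} and \ref{lem2bis}, relying only on Lemma~\ref{lem6}, Remark~\ref{Snonorm}, property (iv) of semidihedral groups, and exactly the Burnside/Feit--Thompson/solvable-case toolkit the paper already deploys in Lemmas~\ref{lemO2'} and \ref{lem5}. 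What the paper's route buys is economy within Section~3, where \(\phi\), \(M\), \(K\) are live objects anyway; what yours buys is a cleaner, automorphism-free proof that also makes explicit a fact the paper uses only implicitly (a minimal counterexample cannot be solvable). Your closing remark is also on point: routing the cases \(O_2(G)\in\{C,D,Q\}\) through the order-\(2\) quotient rather than through a would-be characteristic \(C_4\) is what keeps your case analysis uniform, whereas the paper handles those cases via the characteristic subgroups \(Z(D)\), \(Z(Q)\) instead.
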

\begin{proof}
	First note that, by Lemma~\ref{lem6} and Remark~\ref{Snonorm}, \(O_2(G)\) is a non-trivial proper normal subgroup of \(S\). 
	
	Suppose that \(O_2(G)\) is the maximal dihedral subgroup \(D\) of \(S\) or the maximal generalized quaternion subgroup \(Q\) of \(S\). Note that \(Z(S) = Z(D) = Z(Q)\), and since the center is characteristic in both \(D\) and \(Q\), it follows that \(Z(S)\) is normal in \(G\) in either case.
	
	Now suppose that \(O_2(G)\) is one of the cyclic subgroups contained in the maximal cyclic subgroup \(C\) of \(S\). If \(O_2(G) = Z(S)\), we are done. Otherwise, in each of the other cases -- recalling that every subgroup of a cyclic group is characteristic -- it follows again that \(Z(S)\) is normal in \(G\).
	
Finally, suppose \(O_2(G) \ne Z(S)\). It follows that \(O_2(G)\) has order at least 4. 

Let $M$ be a minimal normal subgroup of $G$ contained in $O_{2'}(F)$, 
whose existence is guaranteed by Lemma~\ref{lemO2'}, and let $K$ be a complement 
of $M$ in~$G$, as provided by Lemma~\ref{lem2bis}. We may assume that $\varphi$ fixes $K$ elementwise, and 
there exists a $2$-element $\tilde{k} \in K \cap S$ such that 
$\varphi|_M = \operatorname{conj}_{\tilde{k}}|_M$ and 
$[\tilde{k}, G] \leq MZ(S)$.

Now let $z \in O_2(G) \setminus Z(S)$ and $1 \neq m \in M$. 
Since $\varphi$ fixes $K$ elementwise and 
$\varphi|_M = \operatorname{conj}_{\tilde{k}}|_M$, we have 
$(mz)\varphi = m^{\tilde{k}} \cdot z$. On the other hand, 
$(mz)\varphi$ must be conjugate to $mz$ (as $\varphi$ is 
class-preserving), so there exists $g \in G$ such that 
$(mz)\varphi = g^{-1}(mz)g$. Writing $g = m'k$ with $m' \in M$ 
and $k \in K$, we note that $[M, O_2(G)] = 1$ (since $M \leq O_{2'}(F)$ 
and $O_2(G)$ are both normal in~$G$ with coprime orders, so 
$[M, O_2(G)] \leq M \cap O_2(G) = 1$). Together with the fact 
that $M$ is abelian, this gives $(mz)^{m'} = mz$, and hence 
$(mz)^g = (mz)^k$, so we may take $g = k \in K$. Since $\varphi$ 
has $2$-power order and acts on $M$ via $\tilde{k}$, we may further 
assume that $g$ is a $2$-element of~$K$.

Then there exists a 2-element \(g \in K\) such that \((mz)\phi = (mz)^g\), that is, \(m^{-1} = m\phi = m^g\) and \(z = z\phi = z^g\). It follows that \(\Span{O_2(G), g}\) is a non-cyclic abelian 2-group of order at least 8. We now prove the latter claim.

First, we show that $g$ centralizes $O_2(G)$.
From the equality $z = z^g$, we know that $g$ centralizes $z$.
Since we are in the case where $O_2(G)$ is a cyclic subgroup of the
maximal cyclic subgroup $C = \langle a \rangle$ of~$S$ (the cases
$O_2(G) \in \{D, Q\}$ having already been treated at the beginning of this proof), and $O_2(G)$
has order at least~$4$, the element $z \in O_2(G) \setminus Z(S)$
generates a subgroup $\langle z \rangle$ of index at most~$2$
in~$O_2(G)$. Now, $g$ is a $2$-element of~$S$ that centralizes
$\langle z \rangle$, and hence it induces an automorphism
of~$O_2(G)$ that fixes a subgroup of index at most~$2$ pointwise.
Since $O_2(G)$ is cyclic of $2$-power order, the only automorphism
of~$O_2(G)$ that fixes a subgroup of index at most~$2$ elementwise
is the identity. Therefore $g$ centralizes~$O_2(G)$, and so
$\langle O_2(G), g \rangle$ is abelian.

Next, we show that $\langle O_2(G), g \rangle$ is non-cyclic.
Suppose, for a contradiction, that $\langle O_2(G), g \rangle$ is
cyclic. Then $g \in O_2(G)$, since a cyclic $2$-group has a unique
subgroup of each order. But $O_2(G) \leq C \trianglelefteq S$ and
$[M, O_2(G)] = 1$, so $g \in O_2(G)$ would imply $m^g = m$ for
every $m \in M$, contradicting $m^g = m^{-1} \neq m$. Therefore
$g \notin O_2(G)$, and $\langle O_2(G), g \rangle$ is a non-cyclic
abelian $2$-group. Since $|O_2(G)| \geq 4$ and $g \notin O_2(G)$,
the group $\langle O_2(G), g \rangle$ has order at least~$8$.

Hence $\langle O_2(G), 
g \rangle$ is a non-cyclic abelian $2$-group of order at least $8$, 
which is impossible since all abelian subgroups of the semidihedral 
group $S$ have order at most $2^{n-1}$ and are cyclic or isomorphic 
to $C_2 \times C_2$. Hence, \(O_2(G) = Z(S)\).
\end{proof}

Before stating the following lemma, we recall some previously established properties of a minimal counterexample to Theorem~\ref{main}.
By Lemma~\ref{lemO2'} and Lemma~\ref{lem2bis}, there exists a minimal normal subgroup of \(G\) contained in \(O_{2'}(F)\), which has a complement \(K\) in \(G\). Modifying \(\phi\) according to Lemma~\ref{lem2bis}, we may assume that \(\phi\) fixes \(K\) element-wise. Furthermore, by Lemma~\ref{lem6} and again Lemma~\ref{lem2bis}, there exists a 2-element \(\tilde{k} \in K\) -- which may be taken to lie in \(S \leq K\) -- such that \([\tilde{k}, G] \leq MO_2(G)\), and both \(\phi\) and \(\tilde{k}\) act on \(M\) by inversion. Note that, by Lemma~\ref{lem3} and Lemma~\ref{lem7}, we also conclude that \([\tilde{k}, G] \leq MZ(S)\); in particular, \([\tilde{k}, S] \leq Z(S)\).

\begin{lemma}\label{lem8}
Let \(G\) be a minimal counterexample to Theorem~\ref{main}, and let \(M\) be a minimal normal subgroup of \(G\) contained in \(O_{2'}(F)\). Then \(M\) is cyclic.
\end{lemma}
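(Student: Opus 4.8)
The plan is to argue by contradiction, so suppose $M$ is not cyclic. Since $M$ is a minimal normal subgroup of $G$ contained in the nilpotent group $O_{2'}(F)$, it is an elementary abelian $p$-group $M\cong C_p^d$ with $p$ odd, and non-cyclicity means $d\ge 2$. The idea is to play the $\mathbb{F}_p$-module structure of $M$ over the semidihedral group $S$ against the normalizer statement of Lemma~\ref{lem3}, ultimately producing two commuting $2$-elements of $S$ that cannot coexist in a semidihedral group. First I would record the action of the distinguished $2$-elements: by the discussion preceding the lemma and by Lemma~\ref{lem7}, there is $\tilde k\in S$ that inverts $M$, with $\tilde k^2$ generating $Z(S)=O_2(G)$. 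Because $[O_2(G),M]\le O_2(G)\cap M=1$, the subgroup $O_2(G)=Z(S)$ centralizes $M$; hence $\tilde k$ has order $4$ and lies in the unique cyclic subgroup $\langle a^{2^{n-3}}\rangle$ of order $4$ inside $\langle a\rangle$. Consequently $a$ acts on $M$ with order exactly $2^{n-2}$ (as $a^{2^{n-3}}$ acts as $-1$ while $a^{2^{n-2}}=\tilde k^2$ acts trivially), so every eigenvalue of $a$ on $M$ is a primitive $2^{n-2}$-th root of unity.

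Next I would choose a witness outside the cyclic part. Fix $t\in S\setminus\langle a\rangle$; since $t^2\in\langle \tilde k^2\rangle\le C_S(M)$, the image of $t$ in $S/C_S(M)$ acts on $M$ as an involution or trivially, hence is diagonalizable with eigenvalues in $\{\pm 1\}$, and therefore stabilizes some hyperplane $U$ of $M$. Applying Lemma~\ref{lem3} to this maximal subgroup $U$, the automorphism $\phi$ induces an inner automorphism $\conj_g$ on $N_G(U)/U$ with $g$ a $2$-element; replacing $S$ by a suitable conjugate as in the proofs of Lemma~\ref{lem5} and Lemma~\ref{lem6}, I may assume that $g$ and $t$ both lie in $S$. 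Since $\phi$ fixes $K\supseteq S$ element-wise, for every $s\in N_S(U)=S\cap N_G(U)\le K$ the commutator $[g,s]$ lies in $U\cap S=1$, so $g$ centralizes $N_S(U)$. As $\tilde k$ inverts $M$ it normalizes $U$, whence $\tilde k\in N_S(U)$ and $[g,\tilde k]=1$; the abelian group $\langle g,\tilde k\rangle$ contains an element of order $4$, so it is cyclic (abelian subgroups of $S$ are cyclic or Klein four), which forces $g\in\langle a\rangle$. Finally $g$ inverts $M/U$, so $-1$ is an eigenvalue of $g=a^s$ on $M$; by the first paragraph this forces $s\equiv 2^{n-3}\pmod{2^{n-2}}$, so $g$ has order $4$ and inverts all of $M$.

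The contradiction then comes from $t$. By construction $t\in N_S(U)$, so $[g,t]=1$; thus $\langle g,t\rangle$ is abelian and contains the order-$4$ element $g\in\langle a\rangle$, hence is cyclic. But every cyclic subgroup of a semidihedral group containing an order-$4$ element of $\langle a\rangle$ is contained in $\langle a\rangle$, so $t\in\langle a\rangle$, contradicting $t\notin\langle a\rangle$. This contradiction shows that $M$ must be cyclic.

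I expect the main obstacle to be reconciling the two competing requirements on a single Sylow $2$-subgroup: the eigenvalue computation together with the relation $[g,\tilde k]=1$ pins the Coleman element $g$ to $\langle a\rangle$ with order $4$, while at the same time a reflection-type element $t\notin\langle a\rangle$ must normalize the chosen hyperplane $U$. Guaranteeing that a $t$-invariant maximal subgroup $U$ is available and that $g$, $\tilde k$ and $t$ can be placed inside one Sylow $2$-subgroup is the delicate coordination step; the remaining verifications are bookkeeping with the semidihedral subgroup structure recalled in Section~\ref{sec2}.
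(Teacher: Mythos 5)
Your proposal follows the same strategic outline as the paper's proof (a reflection-type element $t$ outside $\Span{a}$, a $t$- and $\tilde k$-invariant hyperplane $U$ of $M$, Lemma~\ref{lem3} producing a $2$-element $g$ inducing $\phi$ on $N_G(U)/U$, a commutator computation, and a contradiction with the semidihedral subgroup structure), but it has a genuine gap at the step ``replacing $S$ by a suitable conjugate \dots\ I may assume that $g$ and $t$ both lie in $S$.'' Nothing licenses this. The subgroup $U$ was chosen in terms of $t$ and $\tilde k$, which are already pinned inside $S$ (indeed $\tilde k=a^{\pm 2^{n-3}}$), and $g$ is then prescribed by $U$: if you replace $S$ by a conjugate containing $g$, you lose the memberships $t,\tilde k\in S$ and the identification of $\tilde k$ with a power of $a$; if you instead conjugate $g$, you change the inner automorphism $\conj_g$ induced on $N_G(U)/U$, which is dictated by $\phi$ and cannot be altered. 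Everything downstream leans on $g\in S$: the computation $[g,s]\in U\cap S=1$ needs $[g,s]\in S$; the conclusion $g\in\Span{a}$ applies $C_S(\tilde k)=\Span{a}$ to $g$; and the final contradiction $t\in\Span{a}$ again reads off positions inside the particular Sylow $S$. You flag this yourself as ``the delicate coordination step,'' but it is not resolved, and as written the argument does not go through.

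The paper's proof shows how to avoid ever needing $g\in S$. Lemma~\ref{lem3} (via its proof) provides $g$ in $N_K(U)$, i.e.\ in the complement $K$, and then for every $h\in N_K(U)$ one has $h\phi=h$ and $(h\phi)h^{-g}\in U$, so $hh^{-g}\in U\cap K\leq M\cap K=1$: the commutators are trapped in $U\cap K$, not $U\cap S$, which is exactly why $g\in K$ suffices. Hence $g$ centralizes $P=\Span{\tilde k,t}\leq N_K(U)$, so $\Span{P,g}$ is a $2$-group contained in \emph{some} Sylow $2$-subgroup of $G$, and the contradiction the paper extracts is conjugation-invariant, so it does not matter which Sylow that is: if $g\in P$, then $g\in Z(P)=Z(S)=O_2(G)$ centralizes $M$, contradicting the fact that $g$ inverts $M/U$; if $g\notin P$, then $\Span{P,g}$ is a nonabelian $2$-group whose center contains $\Span{Z(P),g}$, of order at least $4$, which is impossible in a semidihedral group since all its nonabelian subgroups have center of order $2$. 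Your finer analysis (the eigenvalue computation pinning $g$ to $\Span{a}$ with order $4$) would have to be redone inside the ambient Sylow $T\supseteq\Span{P,g}$, where $\tilde k$ and $t$ no longer have known positions relative to the maximal cyclic subgroup of $T$; the center argument sidesteps all of that.
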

\begin{proof}
The structure of the semidihedral group \(S = SD_{2^n}\) implies that if \(1 \ne [k, S] \leq Z(S)\), then \(\tilde{k} \in \Span{a}\) and has order 4; that is, \(\tilde{k} = a^{2^{n-3}}\). Let \(s = y\) be an element of order 2 or 4 in \(S \setminus C\), where \(C\) is the maximal normal cyclic subgroup of \(S\), and let \(P = \langle \tilde{k}, s \rangle\). Note that \(Z(P) = Z(S)\).

Assume, by way of contradiction, that \(M\) is not cyclic. Note that \(s^2 = 1\) or \(s^2 = a^{2^{n-2}} \in Z(S)\); hence \([s^2, M] = 1\), since, by Lemma~\ref{lem7}, \(Z(S)\cong C_2\) is normal in \(G\). In particular, there exists a maximal subgroup \(U\) of \(M\) such that \(P\) normalizes \(U\), that is, \(P \leq N_K(U)\). By Lemma~\ref{lem3}, \(\phi\) induces an inner automorphism on \(N_K(U)M/U\). Thus, there exists a $2$-element $g \in N_K(U)$ such that
$(h\varphi)h^{-g} \in U$ for all $h \in N_K(U)M$. Since $\varphi$
fixes $K$ element-wise and $P \leq N_K(U)$, we have $p\varphi = p$
for every $p \in P$. By Lemma~\ref{lem3}, $(p\varphi)p^{-g} \in U$ for
every $p \in P$, that is, $pp^{-g} \in U \leq M$. On the other
hand, since both $p$ and $g$ belong to~$K$, we have
$pp^{-g} \in K$. Therefore $pp^{-g} \in M \cap K = 1$, which gives
$[g, p] = 1$ for every $p \in P$. Hence $[g, P] = 1$, and so
$\langle P, g \rangle$ is a $2$-group.

If \(g \in P\), then \(g \in Z(P) = Z(S)\), and so \(g\) would act trivially on \(M\); however, this is impossible since, by Lemma~\ref{lem3}, \(g\) acts on \(M\) by inversion on \(M/U\). If \(g \notin P\), then \(\Span{P,g}\) would be a non-abelian 2-group with center of order at least 4, which is a contradiction, since \(Z(P) = Z(S) \cong C_2\). Therefore, we conclude that \(M\) is cyclic.
\end{proof}

Now we are ready to conclude the proof of Theorem~\ref{main}.

\begin{proof}[Proof of Theorem~\ref{main}]
Let us consider the notations introduced above, and let \(G\) be a minimal counterexample to Theorem~\ref{main}. By Lemma~\ref{lem8}, a minimal normal subgroup \(M\) of \(G\) contained in \(O_{2'}(F)\) is cyclic; in particular, \(M \cong C_p\) for some prime \(p\). It follows that \(K / C_K(M)\) is cyclic, and thus, by Lemma~\ref{lem4}, we have \(K / C_K(M) \cong C_2\).

By the beginning of the proof of Lemma~\ref{lem8}, we know that \(\tilde{k}\in K\) belongs to the cyclic subgroup of order 4 in \(S\), which is normal in \(S\) and lies in  \([S, S] \cong C_{2^{n-2}}\). This leads to a contradiction, since \(K/C_K(M) \cong C_2\), and so its Sylow 2-subgroups must be isomorphic to either \(S/C_{2^{n-1}}\), \(S/D_{2^{n-2}}\), or \(S/Q_{2^{n-1}}\). In all these cases, \([S, S] \leq C_K(M)\), implying that \(K\) acts trivially on \(M\). However, we know that \(\tilde{k}\in K\) acts on \(M\) by inversion. Hence, we obtain a contradiction.

This completes the proof of the main theorem.
\end{proof}

\section*{Acknowledgements}
The author  acknowledges  the funding support from MUR-Italy via PRIN 2022RFAZCJ "Algebraic methods in Cryptanalysis". The author wishes to express sincere gratitude to the referee for the careful reading of the manuscript and for the valuable suggestions, which have significantly contributed to improving the clarity of the exposition.

\bibliographystyle{acm}
\bibliography{biblio_coleman}

\end{document}